\newtheorem{Thm}{Theorem}[section]
\newtheorem{Prop}[Thm]{Proposition}
\newtheorem{Lem}[Thm]{Lemma}
\newtheorem{Thmint}{Theorem}[section]
\newtheorem{Corint}[Thmint]{Corollary}
\theoremstyle{definition}
\newtheorem{Rem}[Thm]{Remark}
\newtheorem{Exm}[Thm]{Example}
\newcommand{\Cs}{\mbox{${\rm C}^\ast$}}
\newcommand{\id}{\mbox{\rm id}}
\title[Decreasing intersection of $\mathcal{O}_2$ and hyperbolic groups]{Realization of hyperbolic group \Cs-algebras as decreasing intersection of Cuntz algebras $\mathcal{O}_2$}
\author{Yuhei Suzuki}
\subjclass[2000]{Primary~20F67, Secondary~46L05}
\keywords{Reduced group \Cs -algebras; amenable actions; hyperbolic groups.}
\address{Department of Mathematical Sciences,
University of Tokyo, Komaba, Tokyo, 153-8914, Japan}
\address{Research Institute for Mathematical Sciences, Kyoto University, Kyoto, 606-8502, Japan}
\email{suzukiyu@ms.u-tokyo.ac.jp}
\begin{document}
\begin{abstract}
It is proved that for every ICC group which is embeddable into a hyperbolic group,
the reduced group ${\rm C}^\ast$-algebra is realized as
the intersection of a decreasing sequence of isomorphs of the Cuntz algebra $\mathcal{O}_2$.
The proof is based on the study of amenable quotients of the boundary actions.
\end{abstract} 
\maketitle

\section{Introduction}
It is well-known that every exact group admits an amenable action on a compact space \cite{Oz2},
and each such action gives rise to an ambient nuclear \Cs -algebra of the reduced group \Cs -algebra
via the crossed product construction \cite{Ana0}.
More generally, it is known that every separable exact \Cs -algebra is embeddable into the Cuntz algebra $\mathcal{O}_2$ \cite{KP}.
Motivated by these phenomena, we are interested in the following question.
How small can we take an ambient nuclear \Cs -algebra for a given exact \Cs -algebra?
See \cite{Oz0} for a relevant topic.
In the present paper, we give an answer to the question for the reduced group \Cs -algebras of hyperbolic groups.
The main theorem of the present paper (Theorem \ref{Thmint:Main}) states that an ambient nuclear \Cs -algebra of the reduced hyperbolic group \Cs -algebras
can be arbitrarily small in a certain sense.
\begin{Thmint}[Theorem \ref{Thm:Main}]\label{Thmint:Main}
Let $\Gamma$ be an ICC group which is
embeddable into a hyperbolic group.
Then the reduced group ${\rm C}^\ast$-algebra ${\rm C}_{\rm r}^\ast(\Gamma)$
is realized as the intersection of a decreasing sequence of isomorphs of the Cuntz algebra $\mathcal{O}_2$.
\end{Thmint}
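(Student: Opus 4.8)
The plan is to realize $C_r^*(\Gamma)$ as the common part of a decreasing family of reduced crossed products over amenable $\Gamma$-spaces, each of which is a copy of $\mathcal{O}_2$. First I would fix an embedding $\Gamma \hookrightarrow G$ into a hyperbolic group $G$ and pass to the Gromov boundary. Since the boundary action $G \curvearrowright \partial G$ is topologically amenable, its restriction to $\Gamma$ is amenable, and restricting further to the limit set $\Lambda \subseteq \partial G$ yields an amenable, minimal action $\Gamma \curvearrowright \Lambda$; minimality holds because the ICC hypothesis forces $\Gamma$ to be non-elementary (an amenable, in particular elementary, subgroup of a hyperbolic group is virtually cyclic, hence not ICC). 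By \cite{Ana0} this already produces a nuclear ambient algebra $C(\Lambda)\rtimes_r\Gamma \supseteq C_r^*(\Gamma)$, and the amenability of the action forces every such crossed product into the UCT class by Tu's theorem.

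The second ingredient is a criterion identifying such a crossed product with $\mathcal{O}_2$. For a compact metrizable $\Gamma$-space $X$ whose action is amenable, minimal, topologically free, and sufficiently paradoxical (locally contracting or $n$-filling, as boundary dynamics supply), the algebra $C(X)\rtimes_r\Gamma$ is unital, separable, nuclear, simple, purely infinite, and in the UCT class; topological freeness is where the ICC hypothesis enters, since it rules out a nontrivial kernel or finite normal subgroup for the action. By the Kirchberg--Phillips classification such an algebra is isomorphic to $\mathcal{O}_2$ exactly when $K_0 = K_1 = 0$ and $[1]=0$. Thus the task reduces to engineering amenable $\Gamma$-spaces with all of these dynamical properties together with vanishing $K$-theory of the crossed product. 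Establishing this $K$-theory vanishing --- arranging that the relevant Baum--Connes or Pimsner--Voiculescu type sequence collapses, or that the crossed product tensorially absorbs $\mathcal{O}_2$ --- is the step I expect to be the main obstacle, and it is presumably here that the structure of amenable quotients of the boundary action is exploited.

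With such building blocks in hand I would assemble the decreasing sequence inside a fixed ambient $\mathcal{O}_2 = C(X_1)\rtimes_r\Gamma$. Choose a decreasing chain of $\Gamma$-invariant unital subalgebras $C(X_1)\supseteq C(X_2)\supseteq\cdots$, equivalently an increasing chain of $\Gamma$-invariant closed equivalence relations on $X_1$ realizing the $X_n$ as equivariant quotients, arranged so that each quotient action still satisfies the hypotheses of the $\mathcal{O}_2$-criterion, whence $A_n := C(X_n)\rtimes_r\Gamma \cong \mathcal{O}_2$, while the equivalence relations increase to the full relation, so that $\bigcap_n C(X_n) = \mathbb{C}1$. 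The key observation is that each $X_n$ may remain infinite and dynamically rich even though, as a family, the coefficient algebras shrink to the scalars. The inclusions $A_{n+1}\subseteq A_n$ together with the compatibility of the canonical faithful conditional expectation $E\colon C(X_1)\rtimes_r\Gamma \to C(X_1)$ with these subalgebras then show that any $a\in\bigcap_n A_n$ has all Fourier coefficients $E(a u_g^{-1}) \in \bigcap_n C(X_n)=\mathbb{C}1$; since $C_r^*(\Gamma)$ is precisely the set of elements of the crossed product with scalar Fourier coefficients, this forces $a\in C_r^*(\Gamma)$. As the reverse inclusion $C_r^*(\Gamma)\subseteq\bigcap_n A_n$ is automatic, one concludes $\bigcap_n A_n = C_r^*(\Gamma)$.

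Beyond the $K$-theory vanishing, the delicate points are: that minimality, topological freeness, and pure infiniteness survive passage to all of the quotients $X_n$ simultaneously (a quotient of a topologically free action need not be topologically free, so the equivalence relations must be chosen with genuine care); and the identification of $C_r^*(\Gamma)$ with the scalar-coefficient part of the crossed product, which I would justify via the faithful canonical expectation and the amenability of the action. Reconciling "quotients collapsing to the scalars" with "each quotient staying large enough to yield $\mathcal{O}_2$" is the essential tension in the construction, and resolving it is, I expect, the crux of the whole argument.
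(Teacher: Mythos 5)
Your high-level skeleton---restrict the boundary action to the limit set, quotient by an increasing chain of $\Gamma$-invariant equivalence relations so the coefficient algebras shrink to $\mathbb{C}$, then pin down the intersection by Fourier coefficients---does match the paper. But the proposal has two genuine gaps, and both occur exactly at the steps you flag as delicate, so the plan as written does not close.

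First, the $\mathcal{O}_2$ identification. You propose to make each building block $C(X_n)\rtimes_{\rm r}\Gamma$ itself isomorphic to $\mathcal{O}_2$ by arranging vanishing $K$-theory, and you call this the main obstacle; for the quotients available here it is in fact a dead end. The paper's Appendix computes $K_\ast\bigl(C(\partial\mathbb{F}_d/\mathcal{R}_S)\rtimes\mathbb{F}_d\bigr)\cong(\mathbb{Z}^d,(1,\ldots,1),\mathbb{Z}^d)$, so the natural quotient crossed products are never $\mathcal{O}_2$, and no engineering of boundary quotients in this spirit will give vanishing $K$-theory. The paper does not need it: each $A_n$ is only required to be simple, unital, separable and nuclear, and one then sets $B_n:=\bigl(\bigotimes_{k=n}^\infty\mathcal{O}_2\bigr)\otimes A_n$. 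By Kirchberg--Phillips each $B_n\cong\mathcal{O}_2$, the $B_n$ are decreasing, and conditional expectations slicing off the tensor factors (which converge pointwise in norm to the identity) give $\bigcap_n B_n=\mathbb{C}\otimes\bigl(\bigcap_n A_n\bigr)$. This tensor trick is what makes the theorem true; pure infiniteness, the UCT and $K$-theory vanishing are never used, and their absence from the paper is not an oversight but the point.

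Second, your closing step asserts that ${\rm C}_{\rm r}^\ast(\Gamma)$ is precisely the set of elements of the ambient crossed product with scalar Fourier coefficients, justified ``via the faithful canonical expectation and the amenability of the action.'' That justification does not work, and the statement is not automatic: for a $\Gamma$-subalgebra $B\subset A$, an element of $A\rtimes\Gamma$ all of whose Fourier coefficients lie in $B$ need not lie in $B\rtimes\Gamma$ in general---this is exactly the invariant translation approximation issue studied by Zacharias, and it is where the hypothesis on $\Gamma$ (beyond exactness) genuinely enters. The paper closes this gap using the AP of $\Gamma$: hyperbolic groups are weakly amenable by Ozawa, hence have the AP, and the paper's Proposition 2.6 (Herz--Schur-type multipliers $\Psi_i(y)=\sum_{g}E_g(y)\varphi_i(g)u_g$ converging pointwise in norm to the identity) then shows that Fourier coefficients in $B$ force membership in $B\rtimes\Gamma$. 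Amenability of the action and faithfulness of the expectation play no role in this step. A smaller omission in the same vein: the concrete relations $\mathcal{R}_T$ identifying $g.t^{+\infty}\sim g.t^{-\infty}$, together with their Hausdorffness, the amenability of the quotient actions, the density lemma, and the nontrivial argument that ICC yields topological freeness of the limit-set action, are the technical core of the paper and are left in your proposal as unexecuted ``engineering.''
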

Note that since the decreasing intersection of injective von Neumann algebras
is injective,
the analogous results for von Neumann algebras can never be true.

As immediate consequences of Theorem \ref{Thmint:Main}, we obtain the following results.
\begin{Corint}\label{Corint:decreasing&LP}
The following statements are true.
\begin{enumerate}[\upshape (1)]
\item Nuclearity of \Cs -algebras is not preserved by taking the intersection of a decreasing sequence.
\item The decreasing intersection of \Cs -algebras with the lifting property does
not need to have the local lifting property.
\end{enumerate}
\end{Corint}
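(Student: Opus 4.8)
The plan is to obtain both assertions simultaneously from Theorem~\ref{Thmint:Main}, applied to a single well-chosen group, and then to read off the two failures from standard structure theory. The natural choice is the nonabelian free group $\mathbb{F}_2$: it is hyperbolic, hence trivially embeddable into a hyperbolic group, and it is ICC because in a free group the centralizer of any nontrivial element is cyclic, so that every nontrivial conjugacy class is infinite. Theorem~\ref{Thmint:Main} therefore applies and presents ${\rm C}_{\rm r}^\ast(\mathbb{F}_2)$ as the intersection of a decreasing sequence $(A_n)_n$ with $A_n\cong\mathcal{O}_2$ for all $n$. The only fact I need about the terms of this sequence is that $\mathcal{O}_2$ is separable and nuclear, so by the Choi--Effros lifting theorem it enjoys the lifting property (and a fortiori the local lifting property); hence every $A_n$ has the lifting property.

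For part~(1) I would only observe that ${\rm C}_{\rm r}^\ast(\mathbb{F}_2)$ is non-nuclear, since $\mathbb{F}_2$ is non-amenable, whereas each $A_n$ is nuclear. This already exhibits a decreasing sequence of nuclear \Cs-algebras whose intersection is not nuclear.

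For part~(2) the essential point is that ${\rm C}_{\rm r}^\ast(\mathbb{F}_2)$ fails the local lifting property, and this is the one step that uses more than the definitions. I would combine two facts: that $\mathbb{F}_2$ is exact, so that ${\rm C}_{\rm r}^\ast(\mathbb{F}_2)$ is an exact \Cs-algebra, and Kirchberg's theorem that an exact \Cs-algebra with the local lifting property is necessarily nuclear. As ${\rm C}_{\rm r}^\ast(\mathbb{F}_2)$ is exact but not nuclear, it cannot have the local lifting property, even though every $A_n$ has the stronger lifting property. The main thing to get right is therefore not an estimate but the choice of example: one needs a group that is simultaneously ICC, embeddable into a hyperbolic group, exact, and non-amenable, so that Theorem~\ref{Thmint:Main} applies while ${\rm C}_{\rm r}^\ast(\Gamma)$ is exact and non-nuclear. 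The free group supplies all four properties at once, and indeed any non-amenable ICC subgroup of a hyperbolic group would serve equally well.
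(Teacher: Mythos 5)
Your proposal is correct and follows essentially the same route as the paper: apply the realization theorem to the free group, invoke the Choi--Effros lifting theorem to give every term of the decreasing sequence the lifting property, and observe that ${\rm C}_{\rm r}^\ast(\mathbb{F}_2)$ is non-nuclear (so (1) holds) and fails the local lifting property (so (2) holds). The only difference is one of citation: where the paper quotes \cite[Corollary 3.7.12]{BO} for the failure of the LLP, you re-derive that fact from Kirchberg's theorem that an exact ${\rm C}^\ast$-algebra with the LLP is nuclear, which is precisely the standard argument behind the cited corollary.
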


The proof is based on the study of amenable actions and
approximation theory of discrete groups and \Cs -algebras.
We refer the reader to \cite{BO} for both topics.
To prove Theorem \ref{Thmint:Main},
we first construct a sequence of amenable quotients of the boundary action which tends
to the trivial action on a singleton.
Then using the AP (see \cite[Chapter 12.4]{BO}), we show that their crossed products form a decreasing sequence of
simple unital separable nuclear \Cs -algebras whose intersection is the reduced group \Cs -algebra.
(In fact they can be taken to be Kirchberg algebras. See Remark \ref{Rem:Kirchberg}.)
Finally, to make the terms of the sequence to be isomorphic to the Cuntz algebra $\mathcal{O}_2$, we apply
a deep theorem of Kirchberg and Phillips \cite{KP}.

\subsection*{Structure of the paper}
We first deal with the free groups (Section \ref{Sec:free}).
This is because the proof for the general case is technically more involved,
but the essential idea already appears in that of the free group case.
Then we complete the proof for the general case (Section \ref{Sec:gen}).
In Appendix A, we give computations of the $K$-theory of some amenable quotients
of the boundary actions of the free groups.

\subsection*{Notation}
The symbol `$\otimes$' stands for the minimal tensor product.
The symbols `$\rtimes$' and `$\bar{\rtimes}$' stand for the reduced crossed product of \Cs -algebras
and that of von Neumann algebras respectively.
For a discrete group $\Gamma$ and $g\in \Gamma$,
$\lambda_g$ denotes the unitary element of the reduced group \Cs -algebra ${\rm C}_{\rm r}^\ast(\Gamma)$ corresponding to $g$ .
For a unital $\Gamma$-\Cs-algebra $A$ and $g\in \Gamma$,
denote by $u_g$ the canonical implementing unitary element of $g$ in
the reduced crossed product $A\rtimes \Gamma$.
With the same setting, for $x\in A\rtimes \Gamma$ and $g\in \Gamma$,
the $g$th coefficient $E(xu_g^{\ast})$ of $x$ is denoted by
$E_g(x)$. Here $E\colon A\rtimes\Gamma\rightarrow A$ denotes the canonical conditional expectation.
For a set $X$, denote by $\Delta_X$
the diagonal set $\{(x, x):x\in X\}$ of $X\times X$.
For a subset $Y$ of a topological space $X$,
denote by ${\rm int}(Y)$ and ${\rm cl}(Y)$
the interior and the closure of $Y$ in $X$ respectively.
\section{Free group case}\label{Sec:free}
In this section, we fix a natural number $d$ greater than $1$.
First we review the boundary
of the free group.
Let $S$ be the set of all canonical generators of $\mathbb{F}_d$.
Define a map $\mathbb{F}_d \rightarrow \prod_{n=1}^\infty \left(S\sqcup S^{-1}\sqcup \{e\}\right)$
by $x=s_{i_1}\cdots s_{i_n}\mapsto (s_{i_1}, \ldots, s_{i_n}, e, e, \ldots)$.
Here the product $s_{i_1}\cdots s_{i_n}$ denotes the reduced form of $x$.
Then the closure of its image defines the compactification $\overline{\mathbb{F}_d}$ of $\mathbb{F}_d$.
We define the boundary $\partial \mathbb{F}_d$ of $\mathbb{F}_d$
to be the (closed) subspace $\overline{\mathbb{F}_d}\setminus\mathbb{F}_d$ of $\overline{\mathbb{F}_d}$.
Note that the boundary $\partial \mathbb{F}_d$
is naturally identified with the space of all one-sided infinite reduced words of $S$.
The left multiplication action of $\mathbb{F}_d$ on itself
extends to an action of $\mathbb{F}_d$ on $\overline{\mathbb{F}_d}$ and
its restriction defines
the action of $\mathbb{F}_d$ on $\partial \mathbb{F}_d$.
This is called the boundary action of $\mathbb{F}_d$.
It is well-known that the boundary action is (topologically) amenable \cite[Chapter 5.1]{BO}.
For any $w \in \mathbb{F}_d\setminus \{e\}$,
it is not difficult to check that the sequence $(w^n)_{n=1}^\infty$
converges to an element of the boundary.
We denote the limit by $w^{+\infty}$.
Note that $w^{+\infty}\neq w^{-\infty}$
and the set $\{w^{\pm \infty}\}$ coincides with the
set of all fixed points of $w$ in $\partial \mathbb{F}_d$.
In particular, this shows that the boundary action of $\mathbb{F}_d$ is topologically free.
Here recall that a topological dynamical system $\alpha\colon \Gamma\curvearrowright X$
is said to be topologically free if for any $g\in \Gamma\setminus\{e\}$,
the set $\{x\in X: \alpha(g)(x)\neq x\}$ is dense in $X$.

The following well-known proposition gives a criteria for Hausdorffness of a quotient space.
For completeness of the paper, we include a proof.
\begin{Prop}
Let $X$ be a compact Hausdorff space.
Let $\mathcal{R}$ be an equivalence relation on $X$.
Assume  that each equivalence class of $\mathcal{R}$ is compact and
the quotient map $\pi \colon X\rightarrow X/\mathcal{R}$ is closed.
Then the quotient space $X/\mathcal{R}$ is Hausdorff.
\end{Prop}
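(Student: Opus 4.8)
The plan is to verify the Hausdorff separation axiom directly: given two distinct points of $X/\mathcal{R}$, I will produce disjoint open neighbourhoods of them. Fix distinct points $p, q \in X/\mathcal{R}$ and write $C = \pi^{-1}(p)$ and $D = \pi^{-1}(q)$ for the corresponding equivalence classes, which are disjoint since $p \neq q$. By hypothesis each class is compact, and since $X$ is Hausdorff, $C$ and $D$ are therefore closed. As $X$ is compact Hausdorff it is normal, so I can choose disjoint open sets $U \supseteq C$ and $V \supseteq D$.

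The difficulty is that $U$ and $V$ need not be \emph{saturated} (unions of $\mathcal{R}$-classes), so their images under $\pi$ need not be open. The key step, where the closedness of $\pi$ enters, is to shrink $U$ to a saturated open set still containing $C$. To this end I set $U' = X \setminus \pi^{-1}(\pi(X\setminus U))$ and symmetrically $V' = X \setminus \pi^{-1}(\pi(X\setminus V))$. Since $X \setminus U$ is closed and $\pi$ is closed, $\pi(X\setminus U)$ is closed in $X/\mathcal{R}$; continuity of $\pi$ then makes $\pi^{-1}(\pi(X\setminus U))$ closed, so $U'$ is open. Being the complement of a set of the form $\pi^{-1}(\cdot)$, the set $U'$ is automatically saturated. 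One checks that $C \subseteq U' \subseteq U$: the inclusion $U' \subseteq U$ holds because $X\setminus U \subseteq \pi^{-1}(\pi(X\setminus U))$, while $C \subseteq U'$ holds because $C$ is a single class contained in $U$, so no point of $C$ is $\mathcal{R}$-equivalent to a point of $X \setminus U$. The same statements hold for $V'$, and in particular $U' \cap V' = \emptyset$ since $U' \subseteq U$ and $V' \subseteq V$.

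Finally I push down to the quotient. Because $U'$ is saturated one has $\pi^{-1}(\pi(U')) = U'$, so $\pi(U')$ is open in $X/\mathcal{R}$ by definition of the quotient topology; likewise $\pi(V')$ is open. Clearly $p \in \pi(U')$ and $q \in \pi(V')$. Disjointness of $\pi(U')$ and $\pi(V')$ follows from disjointness of the saturated sets $U', V'$: a common image point would force a point of $U'$ to be equivalent to a point of $V'$, contradicting that both sets are saturated and disjoint. Thus $\pi(U')$ and $\pi(V')$ separate $p$ and $q$, which proves Hausdorffness. The genuinely essential hypothesis is the closedness of $\pi$, used precisely to produce the saturated open refinements $U'$ and $V'$; compactness of the classes is needed only to guarantee that distinct classes are closed and hence separable by normality of $X$.
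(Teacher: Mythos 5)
Your proof is correct and takes essentially the same route as the paper's: separate the two compact (hence closed) classes by disjoint open sets $U, V$ using normality, then exploit closedness of $\pi$ applied to $X\setminus U$ and $X\setminus V$ to obtain the separating neighbourhoods. The only cosmetic difference is that you work upstairs with the saturated open sets $U', V'$, which are exactly the preimages $\pi^{-1}(U_0), \pi^{-1}(V_0)$ of the open sets $U_0, V_0$ that the paper constructs directly in the quotient.
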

\begin{proof}
Let $x, y\in X/\mathcal{R}$ be two distinct elements.
Then $\pi^{-1}(\{x \})$ and $\pi^{-1}(\{ y\})$ are disjoint and both of them are compact by assumption.
Hence there are open subsets $U$ and $V$ of $X$ satisfying $\pi^{-1}(\{ x\}) \subset U$, $\pi^{-1}(\{ y \})\subset V$,
and $U\cap V=\emptyset$.
Since $\pi$ is a closed map,
both $\pi(X\setminus U)$ and $\pi(X\setminus V)$ are closed in $X/\mathcal{R}$.
Define $U_0$ and $V_0$ to be the complements of these two sets respectively.
Then both are open, $x\in U_0$, $y\in V_0$,
$\pi^{-1}(U_0)\subset U$, $\pi^{-1}(V_0)\subset V$.
In particular, they are disjoint.
\end{proof}
\begin{Lem}\label{Lem:closedness of R}
Let $W \subset \mathbb{F}_d\setminus \{e\}$ be a finite subset.
Then the set
$$\mathcal{R}_W:= \Delta_{\partial \mathbb{F}_d} \cup \left\{(gw^{+\infty}, gw^{-\infty}) : g\in \mathbb{F}_d, w\in W\cup W^{-1} \right\}$$
is an $\mathbb{F}_d$-invariant equivalence relation on $\partial\mathbb{F}_d$.
Moreover, the quotient space $\partial \mathbb{F}_d/\mathcal{R}_W$ is a Hausdorff space.
\end{Lem}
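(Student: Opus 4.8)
The plan is to settle the algebraic assertions about $\mathcal{R}_W$ first and then derive Hausdorffness from the preceding Proposition, the real work being a proof that $\mathcal{R}_W$ is a \emph{closed} subset of $\partial\mathbb{F}_d\times\partial\mathbb{F}_d$.

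\emph{$\mathcal{R}_W$ is an $\mathbb{F}_d$-invariant equivalence relation.} Reflexivity is built in, and since $W\cup W^{-1}$ is symmetric together with $w^{-\infty}=(w^{-1})^{+\infty}$ and $w^{+\infty}=(w^{-1})^{-\infty}$, the relation is symmetric; invariance is immediate because $g'\cdot(gw^{+\infty},gw^{-\infty})=((g'g)w^{+\infty},(g'g)w^{-\infty})$ is again of the listed form and $\Delta_{\partial\mathbb{F}_d}$ is invariant. The only point requiring an idea is transitivity, and here I would use that $\{w^{+\infty},w^{-\infty}\}$ is the fixed point set of $w$, so that $\{gw^{+\infty},gw^{-\infty}\}$ is exactly the fixed point set of the conjugate $gwg^{-1}$. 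If $(\xi,\eta),(\eta,\zeta)\in\mathcal{R}_W$ are both off the diagonal, write $\xi=gw^{+\infty}$, $\eta=gw^{-\infty}=hv^{+\infty}$, $\zeta=hv^{-\infty}$; then the common point $\eta$ is fixed by the two nontrivial elements $gwg^{-1}$ and $hvh^{-1}$. Since the stabilizer of a point of $\partial\mathbb{F}_d$ is cyclic (a standard fact for free groups, coming from the fact that two elements sharing an end of the tree have a common power), these two elements have the same fixed point set, i.e. $\{\xi,\eta\}=\{\eta,\zeta\}$, which forces $\xi=\zeta$ and hence $(\xi,\zeta)\in\Delta_{\partial\mathbb{F}_d}\subset\mathcal{R}_W$; the partly diagonal cases are trivial. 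The same discussion shows that each class is a singleton or a two-point fixed point set, so every class is finite and in particular compact.

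\emph{Reduction of Hausdorffness.} By the preceding Proposition it now suffices to see that the quotient map $\pi$ is closed, and for this I would prove the stronger statement that $\mathcal{R}_W$ is closed in $\partial\mathbb{F}_d\times\partial\mathbb{F}_d$: granting closedness, for any closed $C$ its saturation is the image under the first coordinate projection of the compact set $\mathcal{R}_W\cap(\partial\mathbb{F}_d\times C)$, hence compact and closed, so $\pi$ is closed. Combined with compactness of the classes this gives Hausdorffness.

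\emph{Closedness of $\mathcal{R}_W$ (the main obstacle).} I would argue inside the Cayley tree $T$ of $\mathbb{F}_d$, on whose space of ends $\partial\mathbb{F}_d$ each $w\neq e$ acts as a hyperbolic isometry with axis $A_w$ and endpoints $w^{\pm\infty}$, the conjugate $gwg^{-1}$ having axis $gA_w$ with endpoints $gw^{\pm\infty}$. Take a sequence in $\mathcal{R}_W$ converging to $(\xi,\eta)$; discarding diagonal terms and using finiteness of $W\cup W^{-1}$, I may assume it has the form $(g_nw^{+\infty},g_nw^{-\infty})$ for a fixed $w$, and I may assume $\xi\neq\eta$, since otherwise the limit lies in $\Delta_{\partial\mathbb{F}_d}$. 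The crucial geometric input is that convergence of the two endpoints of $g_nA_w$ to the distinct points $\xi,\eta$ forces the geodesics $g_nA_w$ to converge to the bi-infinite geodesic $[\eta,\xi]$ uniformly on bounded sets; in particular $d(e,g_nA_w)$ stays bounded, so the nearest point projections $p_n$ of the base vertex $e$ onto $g_nA_w$ lie in a fixed finite ball. Writing $p_n=g_nq_n$ with $q_n\in A_w$ and translating $q_n$ by a suitable power $w^{k_n}$ into a fixed fundamental segment of the $\langle w\rangle$-action on $A_w$ (which replaces $g_n$ by $g_nw^{k_n}$ without moving $g_nw^{\pm\infty}$), I can arrange, after passing to a subsequence, that $g_nq$ is a constant vertex for a fixed $q$, which forces $g_n$ to be eventually equal to a single element $g$. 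Then $\xi=gw^{+\infty}$ and $\eta=gw^{-\infty}$, so $(\xi,\eta)\in\mathcal{R}_W$. The hard part is exactly this recentering: a priori $g_n\to\infty$ in $\mathbb{F}_d$, and the whole point is that the two fixed points failing to collapse to a single boundary point is precisely what confines the axes to a bounded region of $T$ and lets one reduce to a constant group element.
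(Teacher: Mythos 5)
Your proof is correct, and it shares the paper's overall skeleton — reduce Hausdorffness to closedness of the quotient map via the preceding Proposition, pass to a subsequence with a fixed $w\in W\cup W^{-1}$, and renormalize $g_n$ inside the coset $g_n\langle w\rangle$ — but both halves are implemented differently enough to be worth comparing. For transitivity, the paper invokes amenability of the boundary action (a common fixed point makes the subgroup generated by $gwg^{-1}$ and $hzh^{-1}$ amenable, hence infinite cyclic), whereas you quote the tree-theoretic fact that end stabilizers in a free group are cyclic; these are interchangeable justifications of the same dichotomy ``fixed-point pairs are disjoint or equal.'' For Hausdorffness, the paper verifies closedness of $\pi$ directly: it writes the saturation of a closed set $A$ as $A\cup B$ and shows $\mathrm{cl}(B)\subset A\cup B$ by proving that, when $g_n$ has minimal length in $g_n\langle w\rangle$ and $|g_n|\to\infty$, the infinite words $g_nw^{+\infty}$ and $g_nw^{-\infty}$ share the length-$(|g_n|-|w|)$ prefix of $g_n$, so the two sequences merge into the diagonal. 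You instead prove the stronger and cleaner statement that $\mathcal{R}_W$ is closed in $\partial\mathbb{F}_d\times\partial\mathbb{F}_d$ (from which closedness of $\pi$ follows by projecting compact slices), and you run the contrapositive of the paper's dichotomy geometrically: if the limits $\xi\neq\eta$ stay distinct, every axis $g_nA_w$ must eventually pass through the vertex where the rays from $e$ to $\xi$ and to $\eta$ diverge, so nearest-point projection together with a fundamental domain for $\langle w\rangle\curvearrowright A_w$ pins $g_n$ down to finitely many values, hence (after a subsequence) to a constant. Your packaging buys a reusable general principle (a closed equivalence relation with compact classes on a compact Hausdorff space has closed quotient map) and an argument phrased in terms of axes that transfers naturally to other actions on trees; the paper's prefix combinatorics is more elementary and is precisely the form of the argument that survives, via Gromov products, $\delta$-thin triangles and quasi-geodesics, in the hyperbolic-group lemma of Section 3.
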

\begin{proof}
Clearly $\mathcal{R}_W$ is $\mathbb{F}_d$-invariant.
To show that the $\mathcal{R}_W$ is an equivalence relation,
it suffices to show that for $g, h\in \mathbb{F}_d$ and $w, z\in W$,
two sets $\{gw^{\pm\infty}\}$ and $\{hz^{\pm\infty}\}$
are disjoint or the same.
Assume that the two sets $\{gw^{\pm\infty}\}$ and $\{hz^{\pm\infty}\}$ intersect.
This means that the two elements $gwg^{-1}$ and $hzh^{-1}$ have a common fixed point.
Then by amenability of the boundary action,
the set $\{ gwg^{-1}, hzh^{-1}\}$ generates an amenable subgroup,
which must be isomorphic to $\mathbb{Z}$.
This shows $\{gw^{\pm\infty}\}=\{hz^{\pm\infty}\}$.
Note that this implies that each equivalence class of $\mathcal{R}_W$ contains
at most two elements.

To show Hausdorffness of the quotient space $\partial \mathbb{F}_d/\mathcal{R}_W$, since 
each equivalence class of $\mathcal{R}_W$ is finite, it suffices to show that
the quotient map $\pi\colon \partial \mathbb{F}_d\rightarrow \partial \mathbb{F}_d/\mathcal{R}_W$
is closed.
Let $A$ be a closed subset of $\partial \mathbb{F}_d$.
Then $\pi^{-1}(\pi(A))=A\cup B$, where
$$B:=\left\{gw^{-\infty}\in \partial \mathbb{F}_d: g\in \mathbb{F}_d, w\in W\cup W^{-1}, gw^{+\infty}\in A \right\}.$$
To show closedness of $\pi(A)$, which is equivalent to that of $\pi^{-1}(\pi(A))$ by definition,
it suffices to show the inclusion ${\rm cl}(B)\subset A\cup B$.
Let $x\in {\rm cl}(B)$ be given.
Take a sequence $(g_n w_n^{-\infty})_{n=1}^\infty$ of elements of $B$
which converges to $x$.
By passing to a subsequence, we may assume that
there is $w\in W\cup W^{-1}$ with $w_n=w$ for all $n\in \mathbb{N}$.
Replace $g_n$ by $g_n w^{k(n)}$ for some $k(n) \in \mathbb{Z}$
for each $n \in \mathbb{N}$, we may assume $|g_n|\leq |g_n w^k|$ for all $n \in \mathbb{N}$ and $k \in \mathbb{Z}$.
If the sequence $(g_n)_{n=1}^\infty$ has a bounded subsequence,
then it has a constant subsequence, hence $x\in B$.
Assume $|g_n|\rightarrow \infty$.
Then for each $n\in \mathbb{N}$,
by assumption on $g_n$ and $w$,
the first $(|g_n|-|w|)$th segments of $g_n w^{+\infty}$ and $g_n w^{-\infty}$ coincide with
that of $g_n$.
This shows that the limits of $(g_n w^{+\infty})_{n=1}^\infty$ and $(g_n w^{-\infty})_{n=1}^\infty$  are the same. Since $A$ is closed, we have $x\in A$ as desired.
\end{proof}
The next lemma ensures amenability of certain quotients.
\begin{Lem}\label{Lem:amenable criteria}
Let $\Gamma$ be a group,
$X$ be an amenable compact $\Gamma$-space.
Let $\mathcal{R}$ be a $\Gamma$-invariant equivalence relation of $X$
such that the quotient space $X/\mathcal{R}$ is Hausdorff.
Assume that each equivalence class of $\mathcal{R}$ is finite
and it is trivial for all but countably many classes.
Then the quotient space $X/\mathcal{R}$ is again an amenable compact $\Gamma$-space.
\end{Lem}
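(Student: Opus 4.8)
The plan is to verify amenability of $\Gamma \curvearrowright X/\mathcal{R}$ directly from the definition via continuous approximately invariant probability measures (see \cite[Chapter 4.3]{BO}). Write $Y:=X/\mathcal{R}$ and $\pi\colon X\to Y$ for the quotient map; since $\mathcal{R}$ is $\Gamma$-invariant the action descends to $Y$, and $Y$ is a compact $\Gamma$-space because $\pi$ is a continuous surjection from a compact space onto a Hausdorff space (hence a closed quotient map). It therefore remains to produce a sequence of continuous maps $\nu_n\colon Y\to \mathrm{Prob}(\Gamma)$ with $\sup_{y}\|s_\ast\nu_n^y-\nu_n^{sy}\|_1\to 0$ for every $s\in\Gamma$. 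Equivalently, pulling back along $\pi$, I would construct continuous maps $\tilde{\nu}_n\colon X\to \mathrm{Prob}(\Gamma)$ that are \emph{exactly} constant on each $\mathcal{R}$-class (so that they factor through $Y$) and still approximately equivariant. Amenability of $X$ supplies continuous approximately invariant maps $\mu_n\colon X\to\mathrm{Prob}(\Gamma)$, and the whole problem is to turn these into $\mathcal{R}$-invariant ones.

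The natural candidate is the fiberwise average
$\bar{\mu}_n^y:=\tfrac{1}{|\pi^{-1}(y)|}\sum_{x\in\pi^{-1}(y)}\mu_n^x$, which is legitimate because each class is finite. Approximate equivariance is the routine half: since $\pi^{-1}(sy)=s\,\pi^{-1}(y)$ and the average is a convex combination over the class, one gets the clean estimate $\|s_\ast\bar{\mu}_n^y-\bar{\mu}_n^{sy}\|_1\le\sup_{x\in X}\|s_\ast\mu_n^x-\mu_n^{sx}\|_1$, so the defect is dominated by that of $\mu_n$ and tends to $0$. Thus if $\bar{\mu}_n$ descended to a continuous map on $Y$ we would be done immediately.

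The main obstacle is exactly this continuity: the map $y\mapsto \bar{\mu}_n^y$ is the normalized counting measure along fibers, and it is typically \emph{discontinuous} wherever the fiber cardinality jumps (equivalently, there is in general no $\Gamma$-equivariant conditional expectation $C(X)\to C(Y)$, since the fibers do not carry a weak$^\ast$-continuously varying system of measures). This is the technical heart of the lemma, and it is precisely here that the two standing hypotheses must be used. The plan is to repair continuity by a perturbation of $\mu_n$ supported near the nontrivial classes: because only countably many classes are nontrivial and each is finite, and because Hausdorffness of $Y$ forces $\mathcal{R}=(\pi\times\pi)^{-1}(\Delta_Y)$ to be closed in $X\times X$, one can modify $\mu_n$ on small neighbourhoods of these countably many classes so that the resulting $\tilde{\nu}_n$ takes the averaged value exactly on each class, hence factors through a \emph{continuous} map on $Y$. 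The finiteness of the classes keeps each correction a finite convex combination, and the countability together with the closedness of $\mathcal{R}$ is what one needs to control the cutoffs so that the perturbation changes the equivariance defect only by a quantity still tending to $0$.

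The delicate point I expect to fight with is reconciling the two requirements simultaneously: descent to $Y$ demands \emph{exact} constancy on fibers, while the crossed-product application demands \emph{approximate} equivariance, and the glued points of a single class may be far apart in $X$ (as in the gluing $gw^{+\infty}\sim gw^{-\infty}$ of Lemma \ref{Lem:closedness of R}), so that forcing exact constancy introduces a non-small local change. Making the associated cutoff functions asymptotically $\Gamma$-invariant, using that the nontrivial classes form a countable $\mathcal{R}$-closed family, is the step that carries all the weight; once it is in place, the estimate of the second paragraph finishes the argument.
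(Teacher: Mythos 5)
Your reduction to the fiberwise average and your equivariance estimate are fine, but the proposal has a genuine gap exactly at the point you yourself flag as ``the step that carries all the weight'': the continuity repair is never constructed, and it is doubtful it can be constructed in the form you describe. In the situation the lemma is actually applied to ($X=\partial\mathbb{F}_d$, $\mathcal{R}=\mathcal{R}_W$), the points lying in nontrivial classes form a countable \emph{dense} subset of $X$ (they form a union of orbits of a minimal action), and the two points $gw^{+\infty}$, $gw^{-\infty}$ of a single class are macroscopically far apart. So a perturbation ``supported on small neighbourhoods of the nontrivial classes'' is not a local correction at all: the neighbourhoods necessarily cover a dense open set, each correction must coordinate values between two distant points of $X$, and the corrections attached to the densely many classes interfere with one another. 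Nothing in your sketch explains how to keep the resulting map simultaneously continuous, exactly constant on every class, and with small equivariance defect; that is the entire content of the lemma, and it is left as a hope rather than proved.

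The paper sidesteps this obstacle by leaving the topological category: by \cite[Theorem 4.5]{Ana0}, amenability of $X/\mathcal{R}$ is equivalent to injectivity of $C(X/\mathcal{R})^{\ast\ast}\bar{\rtimes}\Gamma$. Since only countably many classes are nontrivial and each is finite, the diffuse parts of $C(X)^{\ast\ast}$ and $C(X/\mathcal{R})^{\ast\ast}$ coincide, so injectivity need only be checked for the atomic part $\ell^{\infty}(X/\mathcal{R})$, identified inside $\ell^{\infty}(X)$ with the functions constant on each class. There your averaging idea works verbatim: averaging over each finite class defines a $\Gamma$-equivariant \emph{normal} conditional expectation $\ell^{\infty}(X)\rightarrow\ell^{\infty}(X/\mathcal{R})$ --- in $\ell^{\infty}$ no topological continuity is required, so the discontinuity of the fiber cardinality is invisible --- and injectivity of $\ell^{\infty}(X)\bar{\rtimes}\Gamma$ passes to $\ell^{\infty}(X/\mathcal{R})\bar{\rtimes}\Gamma$ through this expectation. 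In short, your averaging is the right device but must be executed in the bidual, where the continuity problem you got stuck on disappears; executed at the level of continuous maps into $\mathrm{Prob}(\Gamma)$, the argument remains incomplete.
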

\begin{proof}
Recall that a compact $\Gamma$-space $X$ is amenable if and only if
the von Neumann algebra
$C(X)^{\ast \ast}\bar{\rtimes} \Gamma$ is injective \cite[Theorem 4.5]{Ana0}.
By assumption on $\mathcal{R}$, the diffuse parts of
$C(X)^{\ast \ast}$ and $C(X/\mathcal{R})^{\ast \ast}$ naturally coincide.
Therefore we only have to check injectivity of
the $\Gamma$-crossed product of the atomic part of $C(X/\mathcal{R})^{\ast \ast}$.
Note that we have
$C(X/\mathcal{R})^{\ast \ast}_{\rm atom}= \ell^{\infty} (X/\mathcal{R})$.
This is identified with the $\Gamma$-von Neumann subalgebra of
$C(X)^{\ast \ast}_{\rm atom}= \ell^{\infty}(X)$
consisting of functions which are constant on each equivalence class of $\mathcal{R}$.
Since each equivalence class is finite,
the averaging on each equivalence class gives
a $\Gamma$-equivariant normal conditional expectation
from $C(X)^{\ast\ast}_{\rm atom}$ onto $C(X/\mathcal{R})^{\ast \ast}_{\rm atom}$.
This shows injectivity of $C(X/\mathcal{R})^{\ast \ast}_{\rm atom} \bar{\rtimes} \Gamma$.
\end{proof}

The next lemma is well-known.
For the reader's convenience, we include a proof.
\begin{Lem}\label{Lem:density}
The equivalence relation $\mathcal{R}:=\bigcup_{w \in \mathbb{F}_d \setminus\{e\}} \mathcal{R}_{\{w\}}$
is dense in $\partial \mathbb{F}_d \times \partial \mathbb{F}_d$.
\end{Lem}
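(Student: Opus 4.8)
The plan is to show that $\mathcal{R}$ meets every nonempty basic open subset of $\partial\mathbb{F}_d \times \partial\mathbb{F}_d$. Recall that the topology on $\partial\mathbb{F}_d$ is generated by the cylinder sets $C_\xi := \{x \in \partial\mathbb{F}_d : x \text{ has prefix } \xi\}$, where $\xi$ runs over the finite reduced words in $S \sqcup S^{-1}$: two infinite reduced words are close precisely when they share a long initial segment. Hence a nonempty basic open subset of the product has the form $C_\xi \times C_\eta$, and it suffices to produce, for each pair of finite reduced words $\xi, \eta$, an element $(p, q) \in \mathcal{R}$ with $p \in C_\xi$ and $q \in C_\eta$. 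Since $\mathcal{R} = \bigcup_w \mathcal{R}_{\{w\}}$, this amounts to finding $g \in \mathbb{F}_d$ and $w \in \mathbb{F}_d\setminus\{e\}$ with $gw^{+\infty} \in C_\xi$ and $gw^{-\infty}\in C_\eta$.

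First I would record the description of the limit points used throughout: for a cyclically reduced word $c = c_1\cdots c_\ell$ one has $c^{+\infty} = c_1 c_2 \cdots c_\ell c_1 \cdots$ and $c^{-\infty} = (c^{-1})^{+\infty} = c_\ell^{-1}\cdots c_1^{-1}c_\ell^{-1}\cdots$ (with no cancellation at the seams). Consequently $c^{+\infty}$ has prefix any initial segment of $c$, while $c^{-\infty}$ has prefix any initial segment of $c^{-1}$. Prepending a word $g$ introduces no cancellation as long as the trailing letter of $g$ is compatible with the leading letters of $c^{\pm\infty}$, in which case $g c^{\pm\infty}$ has prefix $g c$ and $g c^{-1}$ respectively.

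The construction then proceeds as follows. Let $p$ be the longest common prefix of $\xi$ and $\eta$, and write $\xi = p \xi'$, $\eta = p\eta'$. In the main case, where $\xi'$ and $\eta'$ are both nonempty, their leading letters differ by maximality of $p$. I would set $g := p$ and take $w := \xi'\, b\, (\eta')^{-1}$, where $b$ is a short buffer word inserted to keep the concatenation reduced. Then the leading letter of $w$ is that of $\xi'$ and its trailing letter is the inverse of the leading letter of $\eta'$; as these two leading letters differ, $w$ is cyclically reduced, so $w^{+\infty}$ has prefix $\xi'$ and $w^{-\infty}$ has prefix $\eta'$. Since $\xi = p\xi'$ and $\eta = p\eta'$ are reduced, no cancellation occurs when prepending $g = p$, giving $gw^{+\infty} \in C_\xi$ and $gw^{-\infty}\in C_\eta$, as required.

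It remains to handle the degenerate cases in which $\xi'$ or $\eta'$ is empty, that is, one of $\xi, \eta$ is a prefix of the other (including $\xi = \eta$). Here I would still take $g := p$ but choose $w$ to be essentially a single generator, or of the form $(\text{generator})\cdots(\eta')^{-1}$, picking the generator among the at least four available letters (recall $d > 1$) so as to avoid both cancellation against $g$ and failure of cyclic reducedness. The only real—though purely combinatorial—obstacle is the bookkeeping of cancellations: one must verify that the prescribed prefixes $\xi$ and $\eta$ genuinely survive in $gw^{+\infty}$ and $gw^{-\infty}$. Peeling off the common prefix into $g$ is exactly what converts the requirement that $w$ be cyclically reduced into the automatically satisfied inequality between the leading letters of $\xi'$ and $\eta'$.
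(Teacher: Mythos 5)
Your proposal is correct and is essentially the paper's own argument: the paper, for distinct words $x,y$ of equal length, takes a buffer $w$ with $|xwy^{-1}|=|x|+|w|+|y|$ and observes that $\bigl((xwy^{-1})^{+\infty},(xwy^{-1})^{-\infty}\bigr)\in C_x\times C_y$, which is exactly your pair $(gw^{+\infty},gw^{-\infty})$ with $g=p$ and core $\xi' b(\eta')^{-1}$, since $gw^{\pm\infty}=(gwg^{-1})^{\pm\infty}$. The only difference is bookkeeping: by refining any basic open set to a product of cylinders over \emph{distinct words of equal length}, the paper sidesteps the degenerate prefix cases you handle separately.
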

\begin{proof}
For any two distinct elements $x, y\in \mathbb{F}_d$ with $|x|=|y|$,
take $w\in \mathbb{F}_d$ satisfying $|xwy^{-1}|=|x|+|w|+|y|$.
Then for any $n\in \mathbb{N}$,
the first $|x|$th segments of $(xwy^{-1})^n$, $(xwy^{-1})^{-n}$ are equal to
$x$, $y$, respectively.
Therefore the same hold true for $(xwy^{-1})^{+\infty}$, $(xwy^{-1})^{-\infty}$,
respectively.
This proves density of $\mathcal{R}$.
\end{proof}
The essential idea of the proof of the next proposition already appears in \cite{Zac}.
\begin{Prop}\label{Prop:AP}
Let $\Gamma$ be a discrete group with the AP.
Let $B \subset A$ be an inclusion of $\Gamma$-${\rm C}^\ast$-algebras.
Assume that an element $x\in A\rtimes \Gamma$ satisfies
$E_g(x) \in B$ for all $g\in \Gamma$.
Then $x$ is contained in $B\rtimes \Gamma$.
Here we identify $B\rtimes \Gamma$ with the ${\rm C}^\ast$-subalgebra
of $A\rtimes \Gamma$ in the canonical way.
\end{Prop}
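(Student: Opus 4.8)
The plan is to exploit the AP to approximate $x$ in norm by elements whose Fourier coefficients live on a finite set, and then to observe that each such approximant already sits inside $B \rtimes \Gamma$. Recall from \cite[Chapter 12.4]{BO} that, because $\Gamma$ has the AP, for the $\Gamma$-\Cs-algebra $A$ there is a net $(\varphi_i)_i$ of finitely supported functions $\varphi_i \colon \Gamma \to \mathbb{C}$ whose associated multipliers
\[
m_{\varphi_i} \colon A \rtimes \Gamma \to A \rtimes \Gamma, \qquad m_{\varphi_i}(a u_g) = \varphi_i(g)\, a u_g \quad (a \in A,\ g \in \Gamma),
\]
converge to $\id_{A \rtimes \Gamma}$ in the point-norm topology. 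This is the single place where the hypothesis on $\Gamma$ enters, and it is the crux of the argument.

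Next I would compute the action of $m_{\varphi_i}$ on $x$. Since the $g$th Fourier coefficient transforms as $E_g(m_{\varphi_i}(x)) = \varphi_i(g)\, E_g(x)$ and $\varphi_i$ has finite support $F_i \subset \Gamma$, the element $m_{\varphi_i}(x)$ has finite Fourier support and hence, by faithfulness of $E$, equals the genuine finite sum
\[
m_{\varphi_i}(x) = \sum_{g \in F_i} \varphi_i(g)\, E_g(x)\, u_g.
\]
By hypothesis $E_g(x) \in B$ for every $g \in \Gamma$, so each summand has the form $b\, u_g$ with $b \in B$ and therefore belongs to $B \rtimes \Gamma$. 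Consequently $m_{\varphi_i}(x) \in B \rtimes \Gamma$ for every $i$.

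Finally, $B \rtimes \Gamma$ is a \Cs-subalgebra of $A \rtimes \Gamma$, hence norm-closed, while $m_{\varphi_i}(x) \to x$ in norm by the choice of the net. It follows that $x \in B \rtimes \Gamma$, as desired.

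The only non-formal step, and the one I expect to be the main obstacle, is extracting from the AP a net of \emph{finitely supported} multipliers converging to the identity of the concrete crossed product $A \rtimes \Gamma$ in the \emph{point-norm} topology, rather than merely in the weaker topology in which the defining net $\varphi_i \to 1$ is given. Should the cited result provide only point-weak convergence, I would upgrade it to point-norm by a Mazur-type convexity argument: as $m_{\varphi_i}(x) \to x$ weakly, suitable convex combinations of the $m_{\varphi_i}(x)$ converge to $x$ in norm, and such convex combinations are again multipliers by finitely supported functions, to which the preceding two paragraphs apply verbatim. Everything else is routine bookkeeping with Fourier coefficients.
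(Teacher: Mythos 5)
Your proof skeleton is the same as the paper's: build finitely supported multipliers $\Psi_i(y)=\sum_g \varphi_i(g)E_g(y)u_g$ from the AP, observe that $\Psi_i(x)\in B\rtimes\Gamma$ whenever all $E_g(x)\in B$, and finish by norm-closedness of $B\rtimes\Gamma$. The gap is exactly at the step you flag as the crux: you cite \cite[Chapter 12.4]{BO} as directly providing finitely supported multipliers that converge point-norm to the identity \emph{of the crossed product} $A\rtimes\Gamma$. That is not what is there. What the AP yields (via the \emph{proof} of Theorem 12.4.9 in \cite{BO}) is a net $(\varphi_i)$ whose multipliers $\Phi_i\colon \lambda_g\mapsto\varphi_i(g)\lambda_g$ on ${\rm C}^\ast_{\rm r}(\Gamma)$ satisfy $\Phi_i\otimes\id_C\to\id$ point-norm for every ${\rm C}^\ast$-algebra $C$ --- a statement about ${\rm C}^\ast_{\rm r}(\Gamma)\otimes C$, not about $A\rtimes\Gamma$. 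Transferring this to the crossed product is a genuine extra idea, and it is the heart of the paper's proof: one embeds $\iota\colon A\rtimes\Gamma\rightarrow (A\rtimes\Gamma)\otimes{\rm C}^\ast_{\rm r}(\Gamma)$ by $a\mapsto a\otimes 1$, $u_g\mapsto u_g\otimes\lambda_g$ (a Fell-absorption type embedding), checks the intertwining relation $\iota\circ\Psi_i=(\id\otimes\Phi_i)\circ\iota$, and then pulls point-norm convergence of $\id\otimes\Phi_i$ (applied with $C=A\rtimes\Gamma$) back through the isometry $\iota$. Without some such mechanism, your assertion $m_{\varphi_i}(x)\to x$ in norm is unjustified.

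Your Mazur-type fallback does not fill this hole, for two reasons. First, it presupposes that $m_{\varphi_i}(x)\to x$ \emph{weakly} for every $x\in A\rtimes\Gamma$; but the definition of the AP only gives $\varphi_i\to 1$ in the $\sigma(M_0A(\Gamma),Q(\Gamma))$ duality of completely bounded multipliers, and point-weak convergence of the induced multipliers on an arbitrary reduced crossed product is not a formal consequence --- it is essentially the statement being proved (it is the Haagerup--Kraus crossed-product slice map theorem, whose proof again goes through the embedding above). Second, the cheap route to such convergence --- verify it on the dense $\ast$-subalgebra of finitely supported elements (where it even holds in norm, since $\varphi_i\to 1$ pointwise) and extend --- fails because the AP, unlike weak amenability, provides no uniform bound on $\|m_{\varphi_i}\|_{\rm cb}$; without equicontinuity, convergence on a dense subspace does not propagate to the closure, and convexity cannot manufacture equicontinuity. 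So the intertwining trick (or an explicit appeal to Haagerup--Kraus) is genuinely needed; apart from this missing step, your argument matches the paper's.
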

\begin{proof}
Since $\Gamma$ has the AP, the proof of Theorem 12.4.9 in \cite{BO} shows that there is a net
$(\varphi_i)_{i \in I}$ of finitely supported functions on $\Gamma$ such that
the bounded linear maps
$\Phi_i\colon \mathrm{C}_{\rm r}^\ast(\Gamma)\rightarrow \mathrm{C}_{\rm r}^\ast(\Gamma)$
given by $\lambda_g\mapsto \varphi_i(g) \lambda_g$
satisfy the following property.
For every \Cs -algebra $C$, the tensor products $\Phi_i\otimes \id_C$ converge to the identity map
in the pointwise norm topology.
For $i \in I$, define the linear map
$\Psi_i \colon A\rtimes\Gamma\rightarrow A\rtimes \Gamma$
by $\Psi_i (y):=\sum_{g\in \Gamma} E_g(y)\varphi_i(g)u_g$.
We claim that the net $(\Psi_i)_{i \in I}$ converges to the identity map
in the pointwise norm topology.
To show this, consider the embedding
$\iota \colon A\rtimes\Gamma \rightarrow (A\rtimes\Gamma)\otimes {\rm C}_{\rm r}^\ast(\Gamma)$
induced from the maps $a\in A\mapsto a\otimes 1$ and
$u_g\in \Gamma \mapsto u_g \otimes \lambda_g$.
Then the composite $\iota \circ \Psi_i$
coincides with the composite $({\rm id}\otimes \Phi_i)\circ \iota$.
This proves the convergence condition.
Now let $x$ be as stated.
Then for any $i\in I$,
we have $\Psi_i(x) \in B\rtimes\Gamma$.
Since the net $(\Psi_i(x))_{i \in I}$ converges in norm to $x$,
we have $x \in B\rtimes \Gamma$ as desired.
\end{proof}
\begin{Thm}\label{Thm:intersection}
The reduced group ${\rm C}^\ast$-algebra ${\rm C}_{\rm r}^\ast(\mathbb{F}_d)$
is realized as the intersection of a decreasing sequence of simple unital separable nuclear \Cs -algebras.
\end{Thm}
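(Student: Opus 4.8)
The plan is to construct the decreasing sequence directly inside the crossed product $C(\partial\mathbb{F}_d)\rtimes\mathbb{F}_d$ attached to the boundary action, using the equivalence relations $\mathcal{R}_W$ of Lemma \ref{Lem:closedness of R}. First I would enumerate $\mathbb{F}_d\setminus\{e\}=\{w_1,w_2,\dots\}$ and set $W_n:=\{w_1,\dots,w_n\}$, so that $\mathcal{R}_{W_1}\subset\mathcal{R}_{W_2}\subset\cdots$ and $\bigcup_n\mathcal{R}_{W_n}=\mathcal{R}$, the relation of Lemma \ref{Lem:density}. Writing $X:=\partial\mathbb{F}_d$ and $X_n:=X/\mathcal{R}_{W_n}$, each $X_n$ is a compact Hausdorff $\mathbb{F}_d$-space by Lemma \ref{Lem:closedness of R}, and the equivariant quotient maps give unital inclusions $C(X_{n+1})\subset C(X_n)\subset C(X)$, realized as the functions constant on the respective equivalence classes. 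Setting $A_n:=C(X_n)\rtimes\mathbb{F}_d\subset C(X)\rtimes\mathbb{F}_d$ then produces a decreasing sequence $A_1\supset A_2\supset\cdots$ of unital \Cs-subalgebras.

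Next I would check that each $A_n$ has the required properties. Separability holds because $C(X_n)$ is a separable subalgebra of $C(X)$, whence $X_n$ is metrizable, and $\mathbb{F}_d$ is countable. Nuclearity follows from amenability of $\mathbb{F}_d\curvearrowright X_n$: the relation $\mathcal{R}_{W_n}$ has finite classes and only countably many nontrivial ones (as $\mathbb{F}_d$ is countable and $W_n$ finite), so Lemma \ref{Lem:amenable criteria} applies. For simplicity I would invoke the standard fact that a minimal topologically free action has simple reduced crossed product; minimality of $\mathbb{F}_d\curvearrowright X_n$ is inherited from the boundary action.

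The step requiring the most care is topological freeness of $\mathbb{F}_d\curvearrowright X_n$, and I expect this to be the main obstacle. For $g\neq e$, a point of $X_n$ is fixed by $g$ precisely when $g$ preserves its fibre in $X$; since each fibre has at most two points, such a point either lies over a genuine fixed point of $g$ in $X$ (of which there are at most two, namely $g^{\pm\infty}$) or over a two-point class that $g$ interchanges. As there are only countably many nontrivial classes, the fixed-point set of $g$ in $X_n$ is countable. Moreover $X_n$ is perfect: an isolated point would make a finite fibre open in the perfect space $X$, which is impossible. Hence every nonempty open subset of $X_n$ is uncountable, so the countable fixed-point set has empty interior, giving topological freeness and thus simplicity of $A_n$.

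It remains to identify the intersection, which is the conceptual heart and where the earlier results combine. The inclusion $C_{\rm r}^\ast(\mathbb{F}_d)=\mathbb{C}1\rtimes\mathbb{F}_d\subset A_n$ is clear for every $n$. Conversely, let $x\in\bigcap_n A_n$. For each $n$ and each $g$ the coefficient $E_g(x)$ lies in $C(X_n)$, i.e.\ is constant on every $\mathcal{R}_{W_n}$-class; since $\bigcup_n\mathcal{R}_{W_n}=\mathcal{R}$, it is constant on every $\mathcal{R}$-class. Because $\mathcal{R}$ is dense in $X\times X$ by Lemma \ref{Lem:density} and $E_g(x)$ is continuous, $E_g(x)$ must be a scalar. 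As $\mathbb{F}_d$ has the AP, Proposition \ref{Prop:AP} applied to $\mathbb{C}1\subset C(X)$ then forces $x\in C_{\rm r}^\ast(\mathbb{F}_d)$, so $\bigcap_n A_n=C_{\rm r}^\ast(\mathbb{F}_d)$. In contrast to the topological-freeness verification above, this intersection identification is exactly what Lemma \ref{Lem:density} and Proposition \ref{Prop:AP} were designed to deliver.
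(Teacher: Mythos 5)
Your proposal is correct and follows essentially the same route as the paper: the same exhaustion $W_n$, the same subalgebras $A_n=C(\partial\mathbb{F}_d/\mathcal{R}_{W_n})\rtimes\mathbb{F}_d$, nuclearity via Lemma \ref{Lem:amenable criteria}, simplicity via minimality and topological freeness, and the identification of the intersection via Lemma \ref{Lem:density} together with Proposition \ref{Prop:AP}. The only difference is that you spell out topological freeness (countable fixed-point set in a perfect quotient) where the paper simply declares it clear, and your argument for that step is sound.
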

\begin{proof}
Take an increasing sequence $(W_n)_{n=1}^\infty$ of finite subsets of $\mathbb{F}_d\setminus\{e\}$
which exhausts $\mathbb{F}_d\setminus\{e\}$.
Consider the ${\rm C}^\ast$-subalgebra $A_n:=C(\partial\mathbb{F}_d/ \mathcal{R}_{W_n})\rtimes \mathbb{F}_d$ of
$C(\partial \mathbb{F}_d)\rtimes \mathbb{F}_d$ for each $n\in \mathbb{N}$.
Then, for each $n\in \mathbb{N}$, by Lemmas \ref{Lem:closedness of R} and \ref{Lem:amenable criteria}, the action $\mathbb{F}_d\curvearrowright \partial \mathbb{F}_d/\mathcal{R}_{W_n}$ is
amenable. 
Its minimality and topological freeness are clear.
Therefore, by \cite[Theorem 4.5]{Ana0} and \cite{AS}, each $A_n$ is a simple unital separable nuclear \Cs -algebra.
For all $g\in \mathbb{F}_d$, we have 
$$E_g\left( \bigcap_{n=1}^\infty A_n \right)\subset \bigcap_{n=1}^\infty E_g(A_n) = \bigcap_{n=1}^\infty C(\partial\mathbb{F}_d/ \mathcal{R}_{W_n})=\mathbb{C}.$$
Here the last equality follows from Lemma \ref{Lem:density}.
Since the free group has AP \cite[Corollary 12.3.5]{BO}, Proposition \ref{Prop:AP} proves the equality
$\bigcap_{n=1}^\infty A_n={\rm C}_{\rm r}^\ast(\mathbb{F}_d)$.
\end{proof}
Corollary \ref{Corint:decreasing&LP} now follows from Theorem \ref{Thm:intersection}.
(For statement (2), recall that every separable nuclear \Cs -algebra has the lifting property \cite[Theorem 3.10]{CE},
whereas the reduced group \Cs -algebra ${\rm C}^\ast_{\rm r}(\mathbb{F}_d)$ does not have the local lifting property \cite[Corollary 3.7.12]{BO}.)
\begin{Rem}\label{Rem:Kirchberg}
In fact, we can show that the crossed product
$C(\partial \mathbb{F}_d/\mathcal{R}_W)\rtimes\Gamma$
is a Kirchberg algebra in the UCT class for any finite $W$.
This is done by showing that the action
$\Gamma\curvearrowright \partial \mathbb{F}_d/\mathcal{R}_W$ is a locally boundary action in the sense of \cite{LaS}.
\end{Rem}
\begin{Rem}\label{Rem:K-theory}
In Appendix A, we show that the $K$-theory $(K_0, [1]_0, K_1)$ of
$C(\partial\mathbb{F}_d/ \mathcal{R}_S)\rtimes \mathbb{F}_d$
is isomorphic to $(\mathbb{Z}^d, (1, 1, \ldots, 1), \mathbb{Z}^d)$.
Since the unit $[1]_0\in K_0(C(\partial\mathbb{F}_d)\rtimes \mathbb{F}_d)$ is of finite order,
even from the $K$-theoretical view point, the ambient nuclear \Cs-algebra
$C(\partial\mathbb{F}_d/ \mathcal{R}_S)\rtimes \mathbb{F}_d$ of ${\rm C}^\ast_{\rm r}(\mathbb{F}_d)$
is more tight than the boundary algebra.
\end{Rem}
Now we obtain the main theorem for free groups.
\begin{Thm}\label{Thm:Cuntz}
The reduced group ${\rm C}^\ast$-algebra ${\rm C}_{\rm r}^\ast(\mathbb{F}_d)$
is realized as the intersection of a decreasing sequence of isomorphs of the Cuntz algebra $\mathcal{O}_2$.
\end{Thm}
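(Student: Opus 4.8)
The plan is to convert the decreasing sequence of Theorem~\ref{Thm:intersection} into a decreasing sequence of copies of $\mathcal{O}_2$ by tensoring each term with a tail of an infinite tensor power of $\mathcal{O}_2$, arranged in a staircase so that these auxiliary factors collapse to the scalars in the limit. Write $B:=C(\partial\mathbb{F}_d)\rtimes\mathbb{F}_d$ and let $(A_n)_{n=1}^\infty$ be the decreasing sequence of unital \Cs-subalgebras of $B$ from Theorem~\ref{Thm:intersection}, so that $\bigcap_{n=1}^\infty A_n={\rm C}_{\rm r}^\ast(\mathbb{F}_d)$; by Remark~\ref{Rem:Kirchberg} each $A_n$ is a unital Kirchberg algebra in the UCT class. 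Fix an isomorphism $\bigotimes_{k=1}^\infty\mathcal{O}_2^{(k)}\cong\mathcal{O}_2$ of the infinite minimal tensor power with $\mathcal{O}_2$, work inside the ambient algebra $\mathcal{A}:=B\otimes\bigotimes_{k=1}^\infty\mathcal{O}_2^{(k)}$, and for each $n$ set
\[
B_n:=A_n\otimes\bigotimes_{k\geq n}\mathcal{O}_2^{(k)},
\]
viewed as the \Cs-subalgebra of $\mathcal{A}$ that is scalar in the tensor factors indexed by $k<n$.

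First I would verify that $(B_n)_{n=1}^\infty$ is decreasing: since $A_{n+1}\subset A_n$ while $\bigotimes_{k\geq n+1}\mathcal{O}_2^{(k)}$ sits unitally inside $\bigotimes_{k\geq n}\mathcal{O}_2^{(k)}$ (the scalar in the $n$th factor being absorbed), one gets $B_{n+1}\subset B_n$. Next I would identify each term with $\mathcal{O}_2$: the tail $\bigotimes_{k\geq n}\mathcal{O}_2^{(k)}$ is again an infinite tensor power of $\mathcal{O}_2$, hence $\cong\mathcal{O}_2$, so $B_n\cong A_n\otimes\mathcal{O}_2$. Now $A_n\otimes\mathcal{O}_2$ is a unital separable nuclear simple purely infinite \Cs-algebra in the UCT class, and the K\"unneth theorem together with $K_\ast(\mathcal{O}_2)=0$ yields $K_\ast(A_n\otimes\mathcal{O}_2)=0$; the Kirchberg--Phillips classification theorem \cite{KP}, asserting that a unital Kirchberg algebra in the UCT class is determined by its $K$-theory, then gives $B_n\cong A_n\otimes\mathcal{O}_2\cong\mathcal{O}_2$.

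It remains to prove $\bigcap_{n=1}^\infty B_n={\rm C}_{\rm r}^\ast(\mathbb{F}_d)\otimes\mathbb{C}1$, and I expect this to be the technical heart of the argument. The inclusion $\supset$ is immediate, since ${\rm C}_{\rm r}^\ast(\mathbb{F}_d)\subset A_n$ and $\mathbb{C}1\subset\bigotimes_{k\geq n}\mathcal{O}_2^{(k)}$ for every $n$. For $\subset$ I would combine two slice-map computations, both available because every algebra in sight is nuclear. On the one hand $B_n\subset A_n\otimes\bigotimes_{k=1}^\infty\mathcal{O}_2^{(k)}$, and the commutation of a decreasing intersection with the minimal tensor product gives $\bigcap_n B_n\subset\bigl(\bigcap_n A_n\bigr)\otimes\bigotimes_{k=1}^\infty\mathcal{O}_2^{(k)}={\rm C}_{\rm r}^\ast(\mathbb{F}_d)\otimes\bigotimes_{k=1}^\infty\mathcal{O}_2^{(k)}$. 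On the other hand $B_n\subset B\otimes\bigotimes_{k\geq n}\mathcal{O}_2^{(k)}$, and the triviality of the tail subalgebra of an infinite tensor product gives $\bigcap_n\bigl(B\otimes\bigotimes_{k\geq n}\mathcal{O}_2^{(k)}\bigr)=B\otimes\mathbb{C}1$. Intersecting the two containments leaves exactly ${\rm C}_{\rm r}^\ast(\mathbb{F}_d)\otimes\mathbb{C}1$. The point needing care is precisely these two facts: each is proved by applying the appropriate right or left slice maps $R_\omega$ and $L_\psi$, observing that they carry $B_n$ into $A_n$ respectively into $\bigotimes_{k\geq n}\mathcal{O}_2^{(k)}$, and then using the Tomiyama slice-map characterization of the tensor product together with the approximation of a tail element by finitely supported tensors to force scalarity; nothing beyond standard tensor-product bookkeeping is involved, but it must be set up carefully for the minimal tensor product.
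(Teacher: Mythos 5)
Your construction is exactly the paper's: the same staircase $B_n = A_n \otimes \bigl(\bigotimes_{k\geq n}\mathcal{O}_2\bigr)$ built from the sequence of Theorem \ref{Thm:intersection}, with the same three verification steps, so this is essentially the paper's proof. The two places where your justifications diverge are worth noting. First, to identify $B_n\cong\mathcal{O}_2$ the paper invokes the Kirchberg--Phillips $\mathcal{O}_2$-absorption theorem \cite[Theorem 3.8]{KP}, which states directly that $A\otimes\mathcal{O}_2\cong\mathcal{O}_2$ for every unital simple separable nuclear $A$; this needs only what Theorem \ref{Thm:intersection} establishes. Your detour through pure infiniteness, the UCT, the K\"unneth formula and the classification theorem is mathematically valid, but it requires the extra input of Remark \ref{Rem:Kirchberg} (which the paper only sketches, not proves), and the classification theorem you want is the one in \cite{Kir, Phi}, not in \cite{KP}. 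Second, for the intersection the paper uses a single device: conditional expectations $\id\otimes E_n$ obtained from states on the tail factors, which converge pointwise in norm to the identity and map $B_m$ (for $m > n$) into $\mathbb{C}\otimes A_m$; one short computation then gives $\bigcap_n B_n = \mathbb{C}\otimes\bigl(\bigcap_n A_n\bigr)$. Your two slice-map lemmas --- that a decreasing intersection commutes with $\otimes\, C$ when $C$ is nuclear (or exact), and that the tail subalgebra of an infinite tensor product is trivial relative to $B$ --- are both correct, and you are right to flag that nuclearity is what legitimizes the Fubini/slice-map identification (it fails for general $C^\ast$-algebras, by Wassermann--Kirchberg); but each of these lemmas is itself proved by precisely the conditional-expectation mechanism the paper uses, so your version is a correct repackaging that is slightly less self-contained, while the paper's is more economical.
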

\begin{proof}
First take a decreasing sequence of simple unital separable nuclear \Cs -algebras $(A_n)_{n=1}^\infty$ whose intersection is isomorphic to
the reduced group \Cs -algebra ${\rm C}_{\rm r}^\ast(\mathbb{F}_d)$.
For $n\in \mathbb{N}$,
set $B_n:=\left( \bigotimes_{k=n}^\infty \mathcal{O}_2\right)\otimes A_n$.
Then, thanks to the theorem of Kirchberg and Phillips \cite[Theorem 3.8]{KP}, each $B_n$ is isomorphic to the Cuntz algebra $\mathcal{O}_2$.
Therefore it suffices to show the equality $\bigcap_{n=1}^\infty B_n =\mathbb{C}\otimes \left( \bigcap_{n=1}^\infty A_n \right)$.
To show this, for each $n\in \mathbb{N}$, take a state $\varphi_n$ on $\bigotimes_{k=n+1}^\infty \mathcal{O}_2$
and define the conditional expectation
$E_n\colon \bigotimes_{k=1}^\infty\mathcal{O}_2\rightarrow \bigotimes_{k=1}^n \mathcal{O}_2$
by $x \otimes y \mapsto \varphi_n(y)x$ for $x\in  \bigotimes_{k=1}^n \mathcal{O}_2$ and $y \in \bigotimes_{k=n+1}^\infty \mathcal{O}_2$.
Then the maps $E_n$ converge to the identity map
in the pointwise norm topology and they satisfy
$E_n(B_m)\subset \mathbb{C}\otimes A_m$ for all $n, m\in \mathbb{N}$ with $n\leq m$.
These two facts prove the inclusion
$\bigcap_{n=1}^\infty B_n \subset
\mathbb{C}\otimes \left( \bigcap_{n=1}^\infty A_n \right)$.
The converse inclusion is trivial.
\end{proof}

\section{General case}\label{Sec:gen}
In this section, we complete the proof of the main theorem.
For the definition and basic properties of hyperbolic groups,
we refer the reader to \cite[Section 5.3]{BO} and \cite{GH}.
Here we only recall a few facts.
(See 8.16, 8.21, 8.28, and 8.29 in \cite{GH}.)
For a torsion free element $t$ of a hyperbolic group $\Gamma$, the sequence $(t^n)_{n=1}^\infty$
is quasi-geodesic.
The boundary action of $t$ has exactly two fixed points.
They are the points represented by the quasi-geodesic paths $(t^n)_{n=1}^\infty$ and $(t^{-n})_{n=1}^\infty$.
We denote them by $t^{+\infty}$ and $t^{-\infty}$ respectively.
For any neighborhoods $U_{\pm}$ of $t^{\pm\infty}$,
there is $n\in \mathbb{N}$
such that for any $m\geq n$,
$t^m(\partial \Gamma\setminus U_-)\subset U_+$ holds.

For a metric space $(X, d)$ and its points $x, y, z\in X$,
we denote by $\langle y, z \rangle_x$ the Gromov product $(d(y, x)+ d(z, x) - d(y, z))/2$
of $y, z$ with respect to $x$.
\begin{Lem}
Let $\Gamma$ be a hyperbolic group.
Let  $T$ be a finite set of torsion free elements of $\Gamma$.
Then the set
$$\mathcal{R}_T:= \Delta_{\partial \Gamma}　\cup \left\{ (g.t^{+\infty}, g.t^{-\infty}): g　\in \Gamma, t \in T\cup T^{-1} \right\}$$
is a $\Gamma$-invariant equivalence relation on $\partial \Gamma$.
Moreover, the quotient space $\partial\Gamma/ \mathcal{R}_T$ is a Hausdorff space.
\end{Lem}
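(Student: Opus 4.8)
The plan is to mirror the proof of the free group case (Lemma \ref{Lem:closedness of R}), with the combinatorics of reduced words replaced by estimates on the Gromov product. I must establish two things: that $\mathcal{R}_T$ is a $\Gamma$-invariant equivalence relation all of whose classes are finite, and that the quotient map $\pi\colon \partial\Gamma \to \partial\Gamma/\mathcal{R}_T$ is closed. Granting these, Hausdorffness follows from the Hausdorffness criterion (the Proposition preceding Lemma \ref{Lem:closedness of R}), exactly as in the free group case.

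The $\Gamma$-invariance of $\mathcal{R}_T$ is immediate, since $h.(g.t^{\pm\infty}) = (hg).t^{\pm\infty}$. To see that $\mathcal{R}_T$ is an equivalence relation it suffices to show that for $g, h \in \Gamma$ and $s, t \in T \cup T^{-1}$ the fixed-point pairs $\{g.t^{+\infty}, g.t^{-\infty}\}$ and $\{h.s^{+\infty}, h.s^{-\infty}\}$ are either disjoint or equal; the relation is then the ``same pair'' relation, and each class has at most two elements. So suppose the two pairs meet. Since the fixed-point set of the infinite-order element $gtg^{-1}$ in $\partial\Gamma$ is exactly $\{g.t^{+\infty}, g.t^{-\infty}\}$, and likewise for $hsh^{-1}$, these two elements share a boundary fixed point $\xi$. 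The boundary action $\Gamma \curvearrowright \partial\Gamma$ is amenable, so every point stabilizer is amenable; hence $\langle gtg^{-1}, hsh^{-1}\rangle \subset \mathrm{Stab}(\xi)$ is amenable, and a finitely generated amenable subgroup of a hyperbolic group is virtually cyclic. Its limit set in $\partial\Gamma$ therefore consists of two points, and these must be the fixed-point pair of each of its infinite-order elements. Thus $\{g.t^{+\infty}, g.t^{-\infty}\}$ and $\{h.s^{+\infty}, h.s^{-\infty}\}$ both coincide with this limit set, which proves the claim.

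For closedness of $\pi$, let $A \subset \partial\Gamma$ be closed. As in the free group case one computes $\pi^{-1}(\pi(A)) = A \cup B$, where $B = \{g.t^{-\infty} : g \in \Gamma,\ t \in T\cup T^{-1},\ g.t^{+\infty} \in A\}$; here it is important that $T\cup T^{-1}$ is symmetric, so that $B$ captures the partner of every point of $A$. It then suffices to prove $\mathrm{cl}(B) \subset A \cup B$. Given $x \in \mathrm{cl}(B)$, choose a sequence $(g_n.t_n^{-\infty})_n$ in $B$ converging to $x$; since $T \cup T^{-1}$ is finite, after passing to a subsequence we may assume $t_n = t$ is constant. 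As $t^k$ fixes $t^{\pm\infty}$, replacing $g_n$ by $g_n t^{k(n)}$ alters neither $g_n.t^{+\infty} \in A$ nor $g_n.t^{-\infty}$; choosing $k(n)$ to minimize the word length, we may therefore assume $|g_n| \le |g_n t^k|$ for all $k \in \mathbb{Z}$. If $(g_n)$ has a bounded subsequence then it has a constant one, since balls in $\Gamma$ are finite, and then $x = g.t^{-\infty} \in B$. Otherwise $|g_n| \to \infty$, which is the case where hyperbolicity enters.

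The main point, and the step I expect to be the principal obstacle, is the following geometric estimate. The bi-infinite quasi-geodesic $\{g_n t^k : k \in \mathbb{Z}\}$ has endpoints $g_n.t^{+\infty}$ and $g_n.t^{-\infty}$, and our normalization says that its lattice point nearest $e$ is $g_n$ itself, so $e$ lies at distance $\asymp |g_n|$ from this quasi-geodesic axis. By the standard comparison between the Gromov product of the two endpoints of a quasi-geodesic line and the distance from the basepoint to that line (see \cite{GH} and \cite[Section 5.3]{BO}), there is a constant $C$, depending only on the hyperbolicity constant and the quasi-geodesic constants of $(t^k)_k$, with $\langle g_n.t^{+\infty}, g_n.t^{-\infty}\rangle_e \ge |g_n| - C \to \infty$. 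Passing to a subsequence, $g_n.t^{+\infty}$ converges to some $y \in \partial\Gamma$; the divergence of the Gromov products, together with the upper semicontinuous extension of $\langle\,\cdot\,,\,\cdot\,\rangle_e$ to $\partial\Gamma$, forces $\langle x, y\rangle_e = \infty$, that is $y = x$. Hence $g_n.t^{+\infty} \to x$, and since each $g_n.t^{+\infty}$ lies in the closed set $A$ we conclude $x \in A$. The delicate points to verify with care are this Gromov-product/distance comparison for quasi-geodesic axes and the semicontinuity used to pass to boundary limits; the remainder runs parallel to Lemma \ref{Lem:closedness of R}.
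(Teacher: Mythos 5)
Your proof is correct and follows essentially the same route as the paper's: reduce Hausdorffness to closedness of the quotient map, normalize $g_n$ to minimize word length along the $\langle t\rangle$-coset, split into the bounded and unbounded cases, and in the unbounded case show that $\langle g_n.t^{+\infty}, g_n.t^{-\infty}\rangle_e \geq |g_n|-C \to \infty$, forcing the limits of the two endpoint sequences to coincide. The only differences are in packaging: the paper derives this estimate explicitly from $\delta$-thin triangles and quasi-geodesic stability applied to the lattice points $g_nt^{k}$, $g_nt^{-l}$, and justifies the disjoint-or-equal property of fixed-point pairs by citing \cite[8.30]{GH}, whereas you invoke the standard distance-to-axis versus Gromov-product comparison together with semicontinuity of the extended Gromov product, and you rerun the amenability argument from the paper's free-group case (amenable subgroups of hyperbolic groups being virtually cyclic) for the equivalence-relation step --- all standard and correctly applied.
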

\begin{proof}
Clearly $\mathcal{R}_T$ is $\Gamma$-invariant.
Let $s, t$ be torsion free elements of $\Gamma$.
Then the two sets $\{s^{\pm\infty}\}$ and $\{t^{\pm\infty}\}$
are either disjoint or the same \cite[8.30]{GH}.
Therefore the set $\mathcal{R}_T$ is an equivalence relation.
Note that this shows that each equivalence class of $\mathcal{R}_T$ contains at most two points.

For Hausdorffness of the quotient space, since each equivalence class only consists finitely many points, it suffices to show
that the quotient map $\pi\colon \partial \Gamma\rightarrow \partial \Gamma/\mathcal{R}_T$
is closed.
Let $A$ be a closed subset of $\partial \Gamma$.
Then $\pi^{-1}(\pi(A))=A\cup B$,
where
$$B:=\left\{g.t^{-\infty}\in \partial \Gamma : g\in \Gamma, t\in T\cup T^{-1}, g.t^{+\infty}\in A\right\}.$$
To show closedness of $\pi(A)$, which is equivalent to that of $\pi^{-1}(\pi(A))$,
it suffices to show that ${\rm cl}(B)\subset A\cup B$.
Fix a finite generating set $S$ of $\Gamma$ and denote by $|\cdot |$ and $d(\cdot, \cdot)$
the length function and the left invariant metric on $\Gamma$ determined by $S$ respectively.
Take $\delta > 0$ with the property that every geodesic triangle in $(\Gamma, d)$ is $\delta$-thin \cite[Proposition 5.3.4]{BO}.
Let $x\in {\rm cl}(B)$ and take a sequence $(g_n.t_n^{-\infty})_{n=1}^\infty$ in $B$ which converges
to $x$.
By passing to a subsequence, we may assume that there is $t\in T\cup T^{-1}$
with $t_n=t$ for all $n\in \mathbb{N}$.
Replace $g_n$ by $g_n t^{l(n)}$ for some $l(n) \in \mathbb{Z}$ for each $n\in \mathbb{N}$,
we may further assume $|g_n|\leq |g_n t^k|$ for all $k\in \mathbb{Z}$ and $n\in \mathbb{N}$.
If the sequence $(g_n)_{n=1}^\infty$ has a bounded subsequence,
then it has a constant subsequence, hence we have $x\in B$.
Assume $|g_n|\rightarrow \infty$.
For each $k\in \mathbb{N}$, take a geodesic path $[e, t^k]$ from $e$ to $t^k$.
Since $t$ is torsion free, the sequence $(t^n)_{n=1}^\infty$ is quasi-geodesic.
Therefore, by \cite[Proposition 5.3.5]{BO}, there is $D>0$ such that the Hausdorff distance between
$[e, t^k]$ and $( e, t, \ldots, t^k)$ is less than $D$ for all $k\in \mathbb{N}$.
This shows ${\rm dist}(g_n^{-1}, [e, t^k])\geq |g_n|-D$.
Since a geodesic triangle with the vertices
$\{e, g_n^{-1}, t^k\}$ is $\delta$-thin, the above inequality
implies $|g_nt^k|\geq |t^k|+|g_n|-D-2\delta$.
The same inequality also holds for negative case.
From these two inequalities, we obtain
$\langle g_nt^{k}, g_nt^{-l} \rangle_e\geq |g_n|-D-2\delta$ for all $n, k, l \in \mathbb{N}$.
This inequality shows that
the limits of $(g_n.t^{+\infty})_{n=1}^\infty$ and $(g_n.t^{-\infty})_{n=1}^\infty$ coincide.
Since $A$ is closed, we have $x\in A$ as required.
\end{proof}

For a subgroup $\Lambda$ of a hyperbolic group $\Gamma$, we define
the limit set $L_\Lambda$ of $\Lambda$ to be the closure
of the set $\{t^{+\infty}\in \partial\Gamma: t\in\Lambda {\rm\ torsion\ free}\}$ in $\partial \Gamma$.
Recall that every hyperbolic group
does not contain an infinite torsion subgroup \cite[8.36]{GH}.
Therefore the limit set $L_\Lambda$ is nonempty when $\Lambda$ is infinite.
Clearly, the limit set $L_\Lambda$ is $\Lambda$-invariant
hence $\Lambda$ acts on $L_\Lambda$ in the canonical way.
Next we give two lemmas on the action on the limit set,
which are familiar to specialists.
\begin{Lem}\label{Lem:top}
Let $\Lambda$ be an ICC subgroup of a hyperbolic group $\Gamma$.
Then the action $\varphi_\Lambda$ of $\Lambda$ on its limit set $L_\Lambda$ is amenable, minimal, and topologically free.
\end{Lem}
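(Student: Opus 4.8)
The plan is to establish the three properties separately, deriving all of them from the single structural fact that $\Lambda$ is non-elementary, which I would record first. Since $\Lambda$ is ICC it is infinite, and it cannot be virtually cyclic: an infinite virtually cyclic group contains a finite-index copy of $\mathbb{Z}=\langle t\rangle$, so the centralizer of $t$ has finite index and the conjugacy class of the nontrivial element $t$ is finite, contradicting ICC. Thus $\Lambda$ acts on $\partial\Gamma$ as a non-elementary group of homeomorphisms; its limit set $L_\Lambda$ is perfect, and the pairs $(t^{+\infty},t^{-\infty})$ of fixed points of torsion-free elements $t\in\Lambda$ are dense in $L_\Lambda\times L_\Lambda$ (the analog for $L_\Lambda$ of Lemma \ref{Lem:density}, via the North-South dynamics recalled above; see also \cite{GH}). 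These facts I would use freely.

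Amenability is the easiest step. The boundary action $\Gamma\curvearrowright\partial\Gamma$ of a hyperbolic group is topologically amenable \cite[Section 5.3]{BO}. Restricting the acting group from $\Gamma$ to the subgroup $\Lambda$ preserves amenability, and restricting the resulting action $\Lambda\curvearrowright\partial\Gamma$ to the closed $\Lambda$-invariant subspace $L_\Lambda$ preserves it as well; both are standard permanence properties of topological amenability. Hence $\varphi_\Lambda$ is amenable.

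For minimality I would show that every nonempty closed $\Lambda$-invariant set $C\subset L_\Lambda$ equals $L_\Lambda$. Fix $\xi\in C$ and an arbitrary torsion-free $r\in\Lambda$. By North-South dynamics $r^n\xi\to r^{+\infty}$ whenever $\xi\neq r^{-\infty}$, so $r^{+\infty}\in C$; in the exceptional case $\xi=r^{-\infty}$ I first move $\xi$ off $r^{-\infty}$ by applying a second, independent torsion-free element (available by non-elementarity) and then apply $r^n$. Either way $r^{+\infty}\in C$, and since such points are dense in $L_\Lambda$ by the very definition of the limit set, $C=L_\Lambda$.

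The main obstacle is topological freeness, and here the ICC hypothesis does the essential work. I would argue by contradiction: suppose some $g\in\Lambda\setminus\{e\}$ fixes a nonempty open set $W\subset L_\Lambda$ pointwise, and claim $g$ then fixes all of $L_\Lambda$. As $L_\Lambda$ is perfect, $W$ is infinite, so $g$ fixes more than two points and therefore has finite order \cite[8.28]{GH}. Choosing a torsion-free $t\in\Lambda$ with both endpoints $t^{\pm\infty}\in W$ (possible by density of fixed-point pairs), $g$ fixes both of them, hence lies in the maximal elementary subgroup $E_\Gamma(t)$, which is virtually cyclic; being torsion, $g$ lies in its finite characteristic torsion subgroup, so a suitable power $t^k$ commutes with $g$. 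Now for an arbitrary $\eta\in L_\Lambda$, pick such a $t$ with additionally $t^{-\infty}\neq\eta$; then $t^{kn}\eta\to t^{+\infty}\in W$, so $g$ fixes $t^{kn}\eta$ for large $n$, and commutation gives $t^{kn}(g\eta)=g(t^{kn}\eta)=t^{kn}\eta$, whence $g\eta=\eta$. Thus $g$ lies in the kernel of $\varphi_\Lambda$. This kernel is a normal torsion subgroup of $\Lambda\subset\Gamma$ (an infinite-order element fixes at most two points), hence finite by \cite[8.36]{GH}; a nontrivial finite normal subgroup would contain an element with finite conjugacy class, contradicting ICC. Therefore no such $g$ exists, every $\mathrm{Fix}(g)$ with $g\neq e$ has empty interior, and $\varphi_\Lambda$ is topologically free. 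The delicate point I would treat with care is precisely the passage from fixing an open set to commuting with a power of a torsion-free element, via the structure of elementary subgroups; everything else is bookkeeping with North-South dynamics.
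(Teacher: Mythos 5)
Your proof is correct; amenability and minimality are handled essentially as the paper does (restriction of the amenable boundary action; North--South dynamics plus the fact that attracting fixed points are dense in $L_\Lambda$ by definition), but your argument for topological freeness --- the hard part --- is genuinely different. The paper works only with fixed-point sets: writing $F_G$ for the common fixed-point set of a subgroup $G$, it first disposes of the kernel case $F_{g_1}=L_\Lambda$ exactly as you do, and otherwise shows (via an ICC argument involving the setwise stabilizer $\Lambda_0$ of ${\rm int}(F_{g_1})$, which would have finite index if the construction failed) that one can always find $g_2$ with $\emptyset\neq g_2({\rm int}(F_{g_1}))\cap {\rm int}(F_{g_1})\subsetneq {\rm int}(F_{g_1})$, and then iterates to produce a strictly increasing chain $G_1\subsetneq G_2\subsetneq\cdots$ of finite subgroups, whose union is an infinite torsion subgroup of a hyperbolic group --- a contradiction. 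You instead collapse everything to the kernel case: density of the pairs $(t^{+\infty},t^{-\infty})$ in $L_\Lambda\times L_\Lambda$ (the paper's subsequent lemma, whose proof uses only North--South dynamics and non-elementarity, not topological freeness, so there is no circularity) gives a torsion-free $t$ with both endpoints in the fixed open set $W$; then the torsion element $g$ lies in the virtually cyclic, finite-by-$\mathbb{Z}$ stabilizer of the two endpoints, hence commutes with some power $t^k$, and North--South dynamics plus commutation propagates the fixing to all of $L_\Lambda$, after which the kernel of $\varphi_\Lambda$ is a finite normal subgroup, contradicting ICC. Your route is shorter, treats the two cases uniformly, and makes transparent where ICC enters; its cost is importing the standard structure theory of elementary subgroups (stabilizers of endpoint pairs are virtually cyclic), which the paper's more self-contained induction avoids --- the paper needs nothing beyond the facts it already recalled from Ghys--de la Harpe. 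One minor imprecision on your side: in ruling out virtually cyclic $\Lambda$, it is the centralizer of a suitable power $t^m$ (one generating the normal core of $\langle t\rangle$) that visibly has finite index, not necessarily that of $t$ itself; the contradiction with ICC is unaffected.
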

\begin{proof}
Amenability of the boundary action shows that
the action $\varphi_\Lambda$ is amenable.
Since $\Lambda$ is ICC, it is neither finite nor virtually cyclic.
Hence $\Lambda$ contains a free group of rank $2$ \cite[Theorem 8.37]{GH}.
Hence there are two torsion free elements $s$ and $t$ of $\Lambda$
which do not have a common fixed point.
This shows minimality of $\varphi_\Lambda$.

Assume now that $\varphi_\Lambda$ is not topologically free.
Take an element $g_1 \in \Lambda\setminus \{e\}$ such that
the set $F_{g_1}:=\{x\in L_\Lambda:g_1.x=x\}$ has a nontrivial interior.
Since $L_\Lambda$ does not have an isolated point,
the order of $g_1$ must be finite. 
Assume $F_{g_1}=L_\Lambda$.
This means that the kernel of $\varphi_\Lambda$ is nontrivial.
Since it cannot contain a torsion free element,
it is a nontrivial normal torsion subgroup.
Therefore it must be finite.
This contradicts to the ICC condition.
For a subgroup $G$ of $\Lambda$, we set
$F_G:=\bigcap_{g \in G}F_g$.
Note that for a subgroup $G$ of $\Lambda$ and $g\in\Lambda$,
we have $F_{gGg^{-1}}=gF_G$.
Set $G_1:=\langle g_1\rangle.$
Then ${\rm int}(F_{G_1}) = {\rm int}(F_{g_1})\neq \emptyset$.
We will show that there is $g_2\in\Lambda$ satisfying
$$\emptyset \neq g_2({\rm int}(F_{G_1}))\cap{\rm int}(F_{G_1})\subsetneq {\rm int}(F_{G_1}).$$
Indeed, if such $g_2$ does not exist,
then the family $\{g({\rm int}(F_{G_1})): g\in\Lambda\}$ makes an open covering
of $L_\Lambda$ whose members are mutually disjoint.
(Note that if $g\in \Lambda$ satisfies $ {\rm int}(F_{G_1})\subsetneq g({\rm int}(F_{G_1}))$,
then $g^{-1}$ satisfies the required condition.)
This forces that the subgroup
$$\Lambda_0:=\left\{g\in \Lambda: g({\rm int}(F_{G_1}))={\rm int}(F_{G_1})\right\}$$
has finite index in $\Lambda$.
Since $\Lambda$ is ICC,
the subgroup $G:=\langle gG_1g^{-1}: g\in \Lambda_0\rangle$ must be infinite.
Moreover, by definition, we have ${\rm int}(F_{G})={\rm int}(F_{G_1})\neq 0$.
Hence $G$ must be an infinite torsion subgroup, a contradiction.
Take $g_2\in \Lambda$ as above and set $G_2=\langle G_1, g_2G_1{g_2}^{-1}\rangle$.
Then we have
$\emptyset\neq{\rm int}(F_{G_2})\subsetneq {\rm int}(F_{G_1})$.
This shows that $G_2$ is still finite and is larger than $G_1$.
Continuing this argument inductively, we obtain
a strictly increasing sequence $(G_n)_{n=1}^\infty$ of finite subgroups of $\Lambda$.
Hence the union $\bigcup_{n=1}^\infty G_n$ is an infinite torsion subgroup of $\Lambda$,
again a contradiction. 
\end{proof}

\begin{Lem}
For $\Lambda$ as in Lemma {\rm \ref{Lem:top}}, the equivalence relation
$$\mathcal{R}:=\left( \bigcup_{t\in \Lambda, {\rm\ torsion\ free}} \mathcal{R}_{\{t\}} \right) \cap (L_\Lambda\times L_\Lambda)$$ on $L_{\Lambda}$
is dense in $L_\Lambda\times L_\Lambda$.
\end{Lem}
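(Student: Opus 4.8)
The plan is to exhibit, inside every nonempty open box of $L_\Lambda\times L_\Lambda$, a pair that already lies in $\mathcal{R}$. First observe that $\mathcal{R}$ contains the diagonal $\Delta_{L_\Lambda}$ (each $\mathcal{R}_{\{t\}}$ contains $\Delta_{\partial\Gamma}$, and intersecting with $L_\Lambda\times L_\Lambda$ leaves $\Delta_{L_\Lambda}$) together with every pair $(t^{+\infty},t^{-\infty})$ for torsion free $t\in\Lambda$; indeed $t^{+\infty}\in L_\Lambda$ and $t^{-\infty}=(t^{-1})^{+\infty}\in L_\Lambda$, so such a pair lies in $\mathcal{R}_{\{t\}}\cap(L_\Lambda\times L_\Lambda)$. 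Now fix open sets $U,V\subseteq L_\Lambda$ each meeting $L_\Lambda$. If $U\cap V\neq\emptyset$, any $p\in U\cap V$ gives $(p,p)\in\Delta_{L_\Lambda}\subseteq\mathcal{R}$ and we are done; so assume $U\cap V=\emptyset$. It then suffices to produce a torsion free $s\in\Lambda$ with $s^{+\infty}\in U$ and $s^{-\infty}\in V$.

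The element $s$ will be built from two elements by a ping-pong (North--South) argument, so I first arrange their fixed points in general position. By definition $L_\Lambda$ is the closure of the set of attracting fixed points of torsion free elements of $\Lambda$, so these are dense in $L_\Lambda$; choose a torsion free $a\in\Lambda$ with $a^{+\infty}\in U$. Since $\varphi_\Lambda$ is an infinite minimal system (Lemma \ref{Lem:top}), $L_\Lambda$ is perfect, so $V\cap L_\Lambda$ is infinite and contains at least two distinct attracting fixed points $b_1^{+\infty}\neq b_2^{+\infty}$ of torsion free elements $b_1,b_2\in\Lambda$. If both $b_1^{-\infty}$ and $b_2^{-\infty}$ equalled $a^{-\infty}$, then by \cite[8.30]{GH} the pairs $\{a^{\pm\infty}\}$, $\{b_1^{\pm\infty}\}$, $\{b_2^{\pm\infty}\}$ would all coincide, forcing $b_1^{+\infty}=b_2^{+\infty}$, a contradiction. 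Hence I may choose $b\in\{b_1,b_2\}$ with $b^{+\infty}\in V$ and $b^{-\infty}\neq a^{-\infty}$; note also $a^{+\infty}\neq b^{+\infty}$ because $U\cap V=\emptyset$.

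Finally I set $s:=a^n b^{-n}$ for a large $n$. The idea is that $b^{-n}$ pushes a fixed small neighborhood of $a^{+\infty}$ (which avoids $b^{+\infty}$) toward $b^{-\infty}$, after which $a^n$ carries it---because $b^{-\infty}\neq a^{-\infty}$---into an arbitrarily small neighborhood of $a^{+\infty}$; running the symmetric reasoning for $s^{-1}=b^n a^{-n}$ near $b^{+\infty}$ (using $a^{+\infty}\neq b^{+\infty}$ and $a^{-\infty}\neq b^{-\infty}$) shows that $s^{-1}$ contracts a small neighborhood of $b^{+\infty}$ into itself. Concretely, using the North--South dynamics of $a$ and $b$ recalled above, for all sufficiently large $n$ the map $s$ sends a closed neighborhood $K^{+}\subseteq U$ of $a^{+\infty}$ into its interior and $s^{-1}$ sends a closed neighborhood $K^{-}\subseteq V$ of $b^{+\infty}$ into its interior, with $K^{+}\cap K^{-}=\emptyset$. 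A standard contraction argument then shows that $s$ has infinite order (hence is torsion free) with attracting fixed point $s^{+\infty}\in K^{+}\subseteq U$ and repelling fixed point $s^{-\infty}\in K^{-}\subseteq V$, so that $(s^{+\infty},s^{-\infty})\in\mathcal{R}\cap(U\times V)$, as desired. I expect the main work to be this quantitative ping-pong step: converting the North--South dynamics into explicit nested neighborhoods valid for all large $n$, and verifying that only the two transversality conditions $a^{+\infty}\neq b^{+\infty}$ and $a^{-\infty}\neq b^{-\infty}$ are actually required.
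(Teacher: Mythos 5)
Your overall route is the same as the paper's: approximate an arbitrary pair in $L_\Lambda\times L_\Lambda$ by fixed points of torsion-free elements, and then run a ping-pong with North--South dynamics to produce a single torsion-free element of $\Lambda$ (your $a^nb^{-n}$; the paper uses $s^Nt^N$ built from two transverse torsion-free elements) whose attracting and repelling points land in the prescribed neighborhoods. Your preliminary reductions --- the diagonal case, density of attracting fixed points in $L_\Lambda$, and the selection of $b$ with $b^{+\infty}\in V$ and $b^{-\infty}\neq a^{-\infty}$ via perfectness of $L_\Lambda$ and \cite[8.30]{GH} --- are correct, and in fact make explicit steps that the paper leaves implicit.

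The gap is in the last step. From the hypotheses you actually record --- $s(K^+)\subset\mathrm{int}(K^+)$, $s^{-1}(K^-)\subset\mathrm{int}(K^-)$, $K^+\cap K^-=\emptyset$ --- there is no ``standard contraction argument'' giving what you claim. First, a homeomorphism of a compact space mapping a compact set into its interior need not have a fixed point in it; it only yields a nonempty compact invariant set $\bigcap_{m}s^m(K^+)$. Second, and more seriously here, these hypotheses do not rule out that $s$ is a torsion element: when $\partial\Gamma$ is totally disconnected (e.g.\ $\Gamma$ virtually free), a finite-order element can carry a clopen neighborhood $K^+$ onto itself, which satisfies your hypotheses verbatim; and $\Lambda$ is only assumed ICC, so it may well contain torsion. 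If $s$ were torsion, the points $s^{\pm\infty}$ would not exist and the constructed pair would not lie in $\mathcal{R}$. The fix is to retain the full strength of what your own North--South estimates produce --- and this is exactly what the paper's proof records: for large $n$, $s$ maps all of $\partial\Gamma\setminus\mathrm{int}(K^-)$ into $\mathrm{int}(K^+)$, and $s^{-1}$ maps all of $\partial\Gamma\setminus\mathrm{int}(K^+)$ into $\mathrm{int}(K^-)$. Iterating (using $K^+\cap K^-=\emptyset$) gives $s^m\bigl(\partial\Gamma\setminus\mathrm{int}(K^-)\bigr)\subset K^+$ for every $m\geq 1$; choosing $K^\pm$ small enough that $K^+\cup K^-\neq\partial\Gamma$, this forbids $s^k=e$, so $s$ is torsion free. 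Then, being a torsion-free element of a hyperbolic group, $s$ has exactly two boundary fixed points and $s^m x\to s^{+\infty}$ for every $x\neq s^{-\infty}$; applying this to any $x\notin K^-\cup\{s^{-\infty}\}$ places $s^{+\infty}\in K^+$ and, symmetrically, $s^{-\infty}\in K^-$. One further small point: the North--South dynamics lives on $\partial\Gamma$, not on $L_\Lambda$, so $K^\pm$ should be closed neighborhoods in $\partial\Gamma$ with $K^+\cap L_\Lambda\subset U$ and $K^-\cap L_\Lambda\subset V$; this costs nothing, since $s^{\pm\infty}\in L_\Lambda$ automatically for torsion-free $s\in\Lambda$.
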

\begin{proof}
Let $s$ and $t$ be two torsion free elements in $\Lambda$ which do not have a common fixed point.
For any neighborhoods $U_{\pm}$ of $s^{\pm \infty}$ and neighborhoods $V_\pm$ of $t^{\pm \infty}$
with the properties $U_+\cap V_-=\emptyset$ and $U_-\cap V_+=\emptyset$,
take a natural number $N$ satisfying
$s^{N}(\partial\Gamma\setminus U_-)\subset U_+$ and
$t^{N}(\partial\Gamma\setminus V_-)\subset V_+$.
Then, for any $m\in \mathbb{N}$, we have $(s^Nt^N)^m(\partial\Gamma\setminus V_-)\subset U_+$
and $(s^Nt^N)^{-m}(\partial\Gamma\setminus U_+)\subset V_-$.
This shows that $(s^Nt^N)^{+\infty}\in {\rm cl}(U_+)$ and $(s^Nt^N)^{-\infty}\in {\rm cl}(V_-)$.
Thus the product ${\rm cl}(U_+)\times {\rm cl}(V_-)$ intersects with $\mathcal{R}$.
This proves
density of $\mathcal{R}$.
\end{proof}

Now we obtain the main theorem.
\begin{Thm}\label{Thm:Main}
Let $\Gamma$ be an ICC group which is
embeddable into a hyperbolic group.
Then the reduced group ${\rm C}^\ast$-algebra ${\rm C}_{\rm r}^\ast(\Gamma)$
is realized as the intersection of a decreasing sequence of isomorphs of the Cuntz algebra $\mathcal{O}_2$.
\end{Thm}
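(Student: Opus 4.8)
The plan is to follow the two-step strategy of the free group case, replacing the boundary $\partial\mathbb{F}_d$ by the limit set of $\Gamma$ inside an ambient hyperbolic group and invoking the lemmas of this section in place of their free group counterparts. Fix an embedding of $\Gamma$ into a hyperbolic group $H$ and regard $\Gamma$ as an ICC subgroup of $H$, with limit set $L_\Gamma\subset\partial H$. By Lemma \ref{Lem:top} the action $\Gamma\curvearrowright L_\Gamma$ is amenable, minimal, and topologically free, and the proof of that lemma also records that $L_\Gamma$ has no isolated points. Moreover, since a hyperbolic group has the AP and the AP passes to subgroups \cite{BO}, the group $\Gamma$ has the AP; this is what will let me run Proposition \ref{Prop:AP}.

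First I would establish the analogue of Theorem \ref{Thm:intersection}: that ${\rm C}_{\rm r}^\ast(\Gamma)$ is the intersection of a decreasing sequence of simple unital separable nuclear \Cs -algebras. Choose an increasing sequence $(T_n)_{n=1}^\infty$ of finite sets of torsion free elements of $\Gamma$ whose union is the set of all torsion free elements of $\Gamma$, and put $\mathcal{R}_n:=\mathcal{R}_{T_n}\cap(L_\Gamma\times L_\Gamma)$. Each $\mathcal{R}_n$ is a $\Gamma$-invariant equivalence relation on $L_\Gamma$ with finite classes, and the canonical injection $L_\Gamma/\mathcal{R}_n\hookrightarrow\partial H/\mathcal{R}_{T_n}$ is a continuous map from a compact space into the Hausdorff space furnished by the first lemma of this section, so $L_\Gamma/\mathcal{R}_n$ is Hausdorff. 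Set $A_n:=C(L_\Gamma/\mathcal{R}_n)\rtimes\Gamma$; since $\mathcal{R}_n\subset\mathcal{R}_{n+1}$ this is a decreasing sequence. Each $A_n$ is nuclear because the quotient action is amenable by Lemma \ref{Lem:amenable criteria} (the classes are finite and only countably many are nontrivial), and simple by \cite{AS} once the quotient action is seen to be minimal and topologically free. Minimality is inherited from $L_\Gamma$. For topological freeness I would argue that a nonempty open subset of the fixed-point set of some $g\neq e$ in $L_\Gamma/\mathcal{R}_n$ pulls back to a nonempty open $U\subset L_\Gamma$ on which $g$ fixes every point lying outside the countable union of the nontrivial classes; as $L_\Gamma$ has no isolated points this would force $g$ to fix a nonempty open subset of $L_\Gamma$, contradicting topological freeness of $\Gamma\curvearrowright L_\Gamma$.

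With the sequence in hand, I would compute the intersection exactly as in Theorem \ref{Thm:intersection}. For every $g\in\Gamma$ one has $E_g(\bigcap_n A_n)\subset\bigcap_n E_g(A_n)=\bigcap_n C(L_\Gamma/\mathcal{R}_n)=\mathbb{C}$, where the last equality holds because any $f\in\bigcap_n C(L_\Gamma/\mathcal{R}_n)$ satisfies $f(x)=f(y)$ for every $(x,y)\in\bigcup_n\mathcal{R}_n$, and $\bigcup_n\mathcal{R}_n$ is dense in $L_\Gamma\times L_\Gamma$ by the density lemma above, so $f$ is constant. Since $\Gamma$ has the AP, Proposition \ref{Prop:AP} applied to the inclusion $\mathbb{C}\subset C(L_\Gamma)$ then yields $\bigcap_n A_n={\rm C}_{\rm r}^\ast(\Gamma)$.

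Finally, to pass from simple unital separable nuclear \Cs -algebras to isomorphs of $\mathcal{O}_2$, the argument of Theorem \ref{Thm:Cuntz} applies verbatim: setting $B_n:=\left(\bigotimes_{k=n}^\infty\mathcal{O}_2\right)\otimes A_n$, each $B_n$ is isomorphic to $\mathcal{O}_2$ by \cite[Theorem 3.8]{KP}, and the slicing conditional expectations show $\bigcap_n B_n=\mathbb{C}\otimes\bigcap_n A_n\cong{\rm C}_{\rm r}^\ast(\Gamma)$. I expect the main obstacle to lie entirely in the first step, namely in verifying that the quotient actions $\Gamma\curvearrowright L_\Gamma/\mathcal{R}_n$ retain amenability, minimality, and --- most delicately --- topological freeness after collapsing the fixed-point pairs, together with the passage from the ambient hyperbolic group to the subgroup $\Gamma$ acting on its own limit set; once these structural points are secured, the remaining arguments are formally the same as in the free group case.
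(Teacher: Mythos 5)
Your proposal is correct and follows essentially the same route as the paper: the paper's own proof of Theorem \ref{Thm:Main} consists precisely of noting that $\Gamma$ inherits the AP (via Ozawa's weak amenability of hyperbolic groups) and then rerunning the proofs of Theorems \ref{Thm:intersection} and \ref{Thm:Cuntz} with the three lemmas of Section \ref{Sec:gen} replacing their free group counterparts. Your additional details --- Hausdorffness of $L_\Gamma/\mathcal{R}_n$ via the continuous injection into $\partial H/\mathcal{R}_{T_n}$, and the Baire-type argument for topological freeness of the quotient action (which genuinely needs an argument here, unlike the free group case where fixed-point sets in the quotient are finite) --- correctly fill in steps the paper leaves implicit.
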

\begin{proof}
Ozawa has shown that every hyperbolic group
is weakly amenable \cite{Oz}.
Thus $\Gamma$ is weakly amenable and in particular has the AP.
From this fact with the above three lemmas,
the proofs of Theorems \ref{Thm:intersection} and \ref{Thm:Cuntz}
now work for our case.
\end{proof}
\begin{Rem}\label{Rem:nonICC}
Even for non-ICC case,
our argument gives a realization of the reduced group \Cs -algebra
as the intersection of a decreasing sequence of nuclear \Cs -algebras.
However, this does not give a decreasing sequence of simple ones,
since the action on the limit set is not faithful when the group is not ICC.
\end{Rem}
We remark that although the reduced group \Cs -algebras in Theorem \ref{Thm:Main}
are simple due to the Powers property \cite[Proposition 11]{Ha}, the decreasing intersection of simple \Cs -algebras does not need to be simple,
even when each term is isomorphic to $\mathcal{O}_2$.
\begin{Exm}
Let $\mathbb{Z}\curvearrowright \mathbb{T}$ be an irrational rotation.
For each $n\in \mathbb{N}$, set $\Gamma_n:=2^n\mathbb{Z}$.
Then each \Cs -algebra $C(\mathbb{T})\rtimes \Gamma_n$
is an irrational rotation algebra and their intersection is $C(\mathbb{T})$. Now the tensor product trick gives
a decreasing sequence of isomorphs of the Cuntz algebra $\mathcal{O}_2$ whose
intersection is $C(\mathbb{T})$.
\end{Exm}

\appendix
\section{$K$-theory of $C(\partial\mathbb{F}_d/\mathcal{R}_S)\rtimes \mathbb{F}_d$}\label{Sec:App}
In this appendix, we compute the $K$-theory $K_\ast:=(K_0, [1]_0, K_1)$
of the crossed product $C(\partial\mathbb{F}_d/\mathcal{R}_S)\rtimes \mathbb{F}_d$
(cf. Remark \ref{Rem:K-theory}).
Here, as before, $d$ is a natural number greater than $1$
and $S$ denotes the set of all canonical generators of $\mathbb{F}_d$.
For $w\in \mathbb{F}_d$, we denote by $p[w]$
the characteristic function of the clopen set 
$$\left\{(x_n)_{n=1}^\infty \in \partial \mathbb{F}_d: x_1\cdots x_{|w|} =w\right\}$$
and set $q[w]:=p[w]+p[w^{-1}].$
Throughout this appendix, we identify $C(\partial\mathbb{F}_d/\mathcal{R}_S)$
with the $\mathbb{F}_d$-\Cs-subalgebra of $C(\partial\mathbb{F}_d)$ in the canonical way.
Under this identification, it is not difficult to check that for $s\in S$,
$q[s]$ is contained in $C(\partial \mathbb{F}_d/ \mathcal{R}_S)$.
We denote the action $\mathbb{F}_d\curvearrowright C(\partial \mathbb{F}_d)$
by $wf$ for $w\in \mathbb{F}_d$ and $f\in C(\partial\mathbb{F}_d)$.
\begin{Lem}\label{Lem:A1}
The \Cs -algebra $C(\partial\mathbb{F}_d/\mathcal{R}_S)$ is generated by the set 
$$\mathcal{P}:=\{wq[s]: w\in \mathbb{F}_d, s\in S\}.$$
In particular, the space $\partial\mathbb{F}_d/\mathcal{R}_S$
is homeomorphic to the Cantor set.
\end{Lem}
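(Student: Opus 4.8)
The plan is to establish the algebra equality $C^\ast(\mathcal{P})=C(\partial\mathbb{F}_d/\mathcal{R}_S)$ by the Stone--Weierstrass theorem, and then to read off the Cantor set statement from Brouwer's characterisation. The inclusion $C^\ast(\mathcal{P})\subseteq C(\partial\mathbb{F}_d/\mathcal{R}_S)$ is immediate: $q[s]$ lies in $C(\partial\mathbb{F}_d/\mathcal{R}_S)$ for each $s\in S$, and since this subalgebra is $\mathbb{F}_d$-invariant (because $\mathcal{R}_S$ is), every $wq[s]$ lies in it too. For the reverse inclusion it suffices, by Stone--Weierstrass, to show that the unital $\ast$-subalgebra $C^\ast(\mathcal{P})$ separates the points of the Hausdorff quotient $\partial\mathbb{F}_d/\mathcal{R}_S$. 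First note that $C^\ast(\mathcal{P})$ is unital: writing $C_s$ for the clopen set of ends beginning with $s$ or $s^{-1}$, the sets $C_s$ ($s\in S$) partition $\partial\mathbb{F}_d$, whence $\sum_{s\in S}q[s]=1$.

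I would run the separation argument on the Cayley tree of $\mathbb{F}_d$. The support of $wq[s]$ is the translate $w\cdot C_s$, which is precisely the set of ends whose geodesic ray from the vertex $w$ starts along the axis $w\langle s\rangle$, in either of the two directions $\pm s$. Let $x\neq y$ be ends with $(x,y)\notin\mathcal{R}_S$, and let $v$ be the vertex at which the rays toward $x$ and $y$ branch, $x$ leaving $v$ in direction $a$ and $y$ in direction $b$, with $a\neq b$. If $a$ and $b$ involve different generators, say $a=s^{\pm1}$ and $b=t^{\pm1}$ with $s\neq t$, then $vq[s]$ takes the value $1$ at $x$ and $0$ at $y$, so it separates them. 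The delicate case is $\{a,b\}=\{s,s^{-1}\}$, i.e.\ $x$ and $y$ leave $v$ in opposite directions along a single axis $v\langle s\rangle$; this is where the definition of $\mathcal{R}_S$ enters in an essential way, and I expect it to be the main obstacle. Here I would pass to the band projections along the axis: for each $n$, the element $M_n:=1-vs^nq[s]\in C^\ast(\mathcal{P})$ is the indicator of the set of ends whose nearest-point projection onto $v\langle s\rangle$ equals the vertex $vs^n$. The condition $(x,y)\notin\mathcal{R}_S$ excludes exactly the glued pair $\{vs^{+\infty},vs^{-\infty}\}$, so at least one of the two ends---say $x$---leaves the axis at a finite vertex $vs^k$; then $x\in M_k$, whereas $y$, being either the opposite endpoint of the axis or an end projecting to a different vertex, is not. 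Thus $M_k$ separates $x$ and $y$, which completes the verification of point separation and hence the algebra equality.

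For the final assertion I would verify the hypotheses of Brouwer's theorem. The quotient $\partial\mathbb{F}_d/\mathcal{R}_S$ is compact and Hausdorff (the latter by Lemma \ref{Lem:closedness of R} applied to $W=S$); it is metrizable since $C^\ast(\mathcal{P})$ is separable, being generated by the countable set $\mathcal{P}$; it is totally disconnected since $C(\partial\mathbb{F}_d/\mathcal{R}_S)$ is generated by projections, so that clopen sets separate its points; and it is perfect, because each $\mathcal{R}_S$-class is finite while $\partial\mathbb{F}_d$ has no isolated points, so that no class is open and no point of the quotient is isolated. Therefore $\partial\mathbb{F}_d/\mathcal{R}_S$ is homeomorphic to the Cantor set.
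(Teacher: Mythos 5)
Your proof is correct, and its skeleton --- Stone--Weierstrass for the algebra equality, then Brouwer's characterisation of the Cantor set --- is exactly the paper's; the genuine difference lies in how the point-separation is organised. The paper argues combinatorially on the letters of the infinite reduced words: when $x$ is not of the form $ws^{+\infty}$ it picks a disagreement index $n$ and the least $m>n$ with $x_m\neq x_n$, and checks that the single translate $(x_1\cdots x_{m-1})q[x_m]$ separates $x$ from $y$; the remaining case, where both points are endpoints of translated axes, is settled by comparing the normal forms $zs^{+\infty}$ and $wt^{+\infty}$. You instead split according to the two directions at the confluence vertex $v$: distinct generators are separated at once by $vq[s]$, while opposite directions along one axis are handled with the complementary projections $M_n=1-vs^{n}q[s]$, the indicators of ``nearest-point projection onto the axis equals $vs^{n}$''; the hypothesis $(x,y)\notin\mathcal{R}_S$ enters exactly to exclude the glued pair of axis endpoints, and your verification that $M_k$ then does the job is sound (note it needs unitality of ${\rm C}^\ast(\mathcal{P})$, which you correctly supply via $\sum_{s\in S}q[s]=1$). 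The two case decompositions are transverse --- the paper splits on eventual periodicity of $x$, you on branch directions --- and your use of complements of translates is a device the paper never needs, since it separates using the projections $wq[s]$ themselves. What your version buys is a cleaner geometric picture in which the role of $\mathcal{R}_S$ is isolated in one sentence; what the paper's buys is that it stays entirely inside the one-sided word model. Your explicit check of the four hypotheses of Brouwer's theorem (compact, metrizable via separability of ${\rm C}^\ast(\mathcal{P})$, totally disconnected since projections separate points, perfect since finite classes cannot be open) is more detailed than the paper's one-line citation, and is correct.
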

\begin{proof}
By the Stone--Weierstrass theorem, it suffices to show that
the set $\mathcal{P}$ separates the points of $\partial \mathbb{F}_d/\mathcal{R}_S$.
Let $x=(x_n)_{n=1}^\infty$ and $y=(y_n)_{n=1}^\infty$ be two elements
in $\partial\mathbb{F}_d$ satisfying $(x, y)\not\in \mathcal{R}_S$.
If $x\not\in \{ws^{+\infty}: w\in \mathbb{F}_d, s\in S\cup S^{-1}\}$,
then take $n\in \mathbb{N}$ with $x_n \neq y_n$.
Let $m$ be the smallest natural number greater than $n$
satisfying $x_{m}\neq x_n$ (which exists by assumption).
Then the projection $(x_1\cdots x_{m-1})(q[x_m])$ separates $x$ and $y$.
Next consider the case $x=zs^{+\infty}, y=wt^{+\infty}$, where $s, t\in S\cup S^{-1}$ and
$z$, $w$ are elements of $\mathbb{F}_d$ whose last alphabets are not equal to $s^{\pm 1}$, $t^{\pm 1}$, respectively.
Assume $|z|\geq |w|$.
Note that the equality $z=w$ implies $s\neq t^{\pm 1}$ by assumption.
Hence the projection $zq[s]$ separates $x$ and $y$.
Thus $\mathcal{P}$ satisfies the required condition.

The last assertion now follows from the following fact.
A topological space is homeomorphic to the Cantor set
if and only if it is compact, metrizable, totally disconnected, and does not have an isolated point.
\end{proof}
\begin{Lem}\label{Lem:A3}
The $K_0$-group of $C(\partial\mathbb{F}_d/\mathcal{R}_S)\rtimes \mathbb{F}_d$
is generated by $\{[q[s]]_0: s\in S\}$.
\end{Lem}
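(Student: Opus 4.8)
The plan is to combine the Pimsner--Voiculescu exact sequence with an explicit description of the $\mathcal{R}_S$-saturated clopen subsets of $\partial\mathbb{F}_d$. Write $A:=C(\partial\mathbb{F}_d/\mathcal{R}_S)$ and let $G:=\langle [q[s]]_0:s\in S\rangle\subseteq K_0(A\rtimes\mathbb{F}_d)$ be the subgroup we wish to show is everything. First I would record two structural facts. Since $u_gfu_g^{\ast}=gf$ for every projection $f\in A$ and every $g\in\mathbb{F}_d$, conjugation by the unitary $u_g$ gives $[gf]_0=[f]_0$; thus the $K_0$-class of a projection coming from $A$ is invariant under the $\mathbb{F}_d$-action. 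Secondly, the cylinders $\{p[a]:a\in S\cup S^{-1}\}$ partition $\partial\mathbb{F}_d$, so $1=\sum_{s\in S}q[s]$ and hence $[1]_0=\sum_{s\in S}[q[s]]_0\in G$.

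Next I would establish surjectivity of the map $\iota_{\ast}\colon K_0(A)\to K_0(A\rtimes\mathbb{F}_d)$ induced by the inclusion. Because $\partial\mathbb{F}_d/\mathcal{R}_S$ is a Cantor set (Lemma \ref{Lem:A1}), we have $K_1(A)=0$ and $K_0(A)=C(\partial\mathbb{F}_d/\mathcal{R}_S,\mathbb{Z})$. Feeding this into the Pimsner--Voiculescu sequence for the free group $\mathbb{F}_d$ kills the relevant connecting term and forces $\iota_{\ast}$ to be onto. Consequently $K_0(A\rtimes\mathbb{F}_d)$ is generated by the classes $[\chi_V]_0$, where $V$ ranges over the $\mathcal{R}_S$-saturated clopen subsets of $\partial\mathbb{F}_d$, and it suffices to prove $[\chi_V]_0\in G$ for every such $V$.

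The heart of the argument is a finite-level description of these saturated sets. Every clopen $V$ is a finite disjoint union of cylinders of some common length $n$. I would show that the identifications defining $\mathcal{R}_S$ induce a perfect matching on the length-$n$ cylinders: writing a reduced word $v$ of length $n$ as $v=gs^{k}$ with $s\in S\cup S^{-1}$ the maximal terminal letter-run (so $k\ge1$ and $g$ ends in neither $s$ nor $s^{-1}$), the cylinder $p[v]$ is matched precisely with $p[gs^{-k}]$, and $V$ is $\mathcal{R}_S$-saturated if and only if it is a union of such matched pairs. Indeed, the only pairs $(gs^{+\infty},gs^{-\infty})$ separating two distinct length-$n$ cylinders are those with $|g|<n$, which are exactly the matching edges, while deeper pairings keep both endpoints in a common cylinder and impose no constraint. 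Since $g$ ends in neither $s$ nor $s^{-1}$ no cancellation occurs, so the characteristic function of a matched pair is $p[gs^{k}]+p[gs^{-k}]=g\cdot q_k[s]$, where $q_k[s]:=p[s^{k}]+p[s^{-k}]$ (note $q_k[s]=q_k[s^{-1}]$, so we may take $s\in S$); each such pair, in particular each $q_k[s]$, is itself saturated and so defines a projection in $A$, and by the invariance above its class equals $[q_k[s]]_0$. Refining $q_k[s]$ by one letter gives the orthogonal decomposition $q_k[s]=q_{k+1}[s]+\sum_{t\in S,\,t\ne s}\bigl(s^{k}q[t]+s^{-k}q[t]\bigr)$, whence $[q_k[s]]_0=[q_{k+1}[s]]_0+2\sum_{t\ne s}[q[t]]_0$; an upward induction on $k$ (base case $q_1[s]=q[s]$) places every $[q_k[s]]_0$ in $G$. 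Summing over the matched pairs that comprise $V$ yields $[\chi_V]_0\in G$, and surjectivity of $\iota_{\ast}$ then gives $K_0(A\rtimes\mathbb{F}_d)=G$, as claimed.

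The step I expect to be the main obstacle is the matching lemma: verifying rigorously that at each finite depth the identifications $gs^{+\infty}\sim gs^{-\infty}$ pair the cylinders off exactly as described, so that saturation becomes equivalent to being a union of translated axis-cores $g\cdot q_k[s]$. This rests on the geometry of the identifications on $\partial\mathbb{F}_d$ and on checking that no cylinder is caught in two different pairings; once this combinatorial picture is secured, the $K$-theoretic bookkeeping above is routine.
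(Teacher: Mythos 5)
Your proposal is correct and follows essentially the same route as the paper's proof: Pimsner--Voiculescu together with the Cantor-set property of $\partial\mathbb{F}_d/\mathcal{R}_S$ to reduce to classes of projections in $C(\partial\mathbb{F}_d/\mathcal{R}_S)$, the matching $gs^{k}\leftrightarrow gs^{-k}$ on same-length cylinders (the paper's involution $w\mapsto\hat{w}$) to describe saturated clopen sets as disjoint unions of translated pairs $g\cdot q[s^k]$, and an induction reducing $[q[s^k]]_0$ to the classes $[q[s]]_0$. The only difference is cosmetic: the paper closes the induction with the identities $q[s^2]=sq[s]+s^{-1}q[s]+q[s]-2$ and $q[s^k]=sq[s^{k-1}]+s^{-1}q[s^{k-1}]-q[s^{k-2}]$, while you instead refine $q[s^k]$ orthogonally by one letter, $q[s^k]=q[s^{k+1}]+\sum_{t\in S,\,t\neq s}\bigl(s^{k}q[t]+s^{-k}q[t]\bigr)$; both identities are valid and yield the same conclusion.
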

\begin{proof}
By Lemma \ref{Lem:A1} and the Pimsner--Voiculescu exact sequence \cite{PV},
the $K_0$-group is generated by the elements represented by a projection in $C(\partial\mathbb{F}_d/\mathcal{R}_S)$.
Let $r$ be a projection in $C(\partial\mathbb{F}_d/\mathcal{R}_S)$.
Then there is $n\in \mathbb{N}$ and a set $F$ of elements of $\mathbb{F}_d$ with the length $n$
satisfying
$r=\sum_{w\in F} p[w]$.
For $w\in \mathbb{F}_d$ whose reduced form is $s_{i(1)}^{n(1)}\ldots s_{i(k)}^{n(k)}$
with $s_{i(k-1)}\neq s_{i(k)}^{\pm 1}$, define
$\hat{w} \in \mathbb{F}_d$ by $s_{i(1)}^{n(1)}\ldots s_{i(k-1)}^{n(k-1)}s_{i(k)}^{-n(k)}$.
We will show that $w\in F$ implies $\hat{w}\in F$.
Indeed, if $w\in F$, then $r(ws_{i(k)}^{+\infty})=1$.
Hence we must have $r(ws_{i(k)}^{-\infty})=1$. This implies $\hat{w}\in F$ as desired.
Since $w\neq \hat{w}$ and $[p[w]+p[\hat{w}]]_0=[q[s_{i(k)}^{n(k)}]]_0$,
it suffices to show the claim for elements of the form $[q[s^n]]_0$ with $s\in S, n\in \mathbb{N}$.
This follows from the equations
$q[s^2]=sq[s]+s^{-1}q[s]+q[s]-2$
and
$q[s^k]=sq[s^{k-1}]+s^{-1}q[s^{k-1}]-q[s^{k-2}]$
for $s\in S$ and $k>2$.
\end{proof}

\begin{Thm}\label{Thm:K}
The
$K_\ast(C(\partial\mathbb{F}_d/\mathcal{R}_S)\rtimes \mathbb{F}_d))$
is isomorphic to $(\mathbb{Z}^d, (1, 1, \ldots, 1), \mathbb{Z}^d)$.
\end{Thm}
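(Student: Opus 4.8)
The plan is to compute both $K$-groups through the Pimsner--Voiculescu exact sequence \cite{PV}, reduce everything to the group homology of $\mathbb{F}_d$ with coefficients in the $K_0$ of the commutative coefficient algebra, and then exploit an equivariant short exact sequence relating the quotient boundary to the full boundary. Write $C:=C(\partial\mathbb{F}_d/\mathcal{R}_S)$ and $P:=C(\partial\mathbb{F}_d,\mathbb{Z})$. By Lemma \ref{Lem:A1} the space $\partial\mathbb{F}_d/\mathcal{R}_S$ is a Cantor set, so $K_0(C)=C(\partial\mathbb{F}_d/\mathcal{R}_S,\mathbb{Z})$ is free abelian and $K_1(C)=0$. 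Since $\mathbb{F}_d$ is free, the Pimsner--Voiculescu sequence collapses (because $K_1(C)=0$) to
\[ 0\to K_1(C\rtimes\mathbb{F}_d)\to \bigoplus_{s\in S}K_0(C)\xrightarrow{\ \partial\ }K_0(C)\to K_0(C\rtimes\mathbb{F}_d)\to 0, \]
with $\partial((\xi_s)_s)=\sum_{s}(\xi_s-s_\ast\xi_s)$. Thus, setting $H_j:=H_j(\mathbb{F}_d;K_0(C))$, we have $K_0(C\rtimes\mathbb{F}_d)=H_0$ (the coinvariants) and $K_1(C\rtimes\mathbb{F}_d)=\ker\partial=H_1$. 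Since $1=\sum_{s\in S}q[s]$ we get $[1]_0=\sum_{s\in S}[q[s]]_0$, so once $K_0$ is identified with $\mathbb{Z}^d$ with basis $\{[q[s]]_0\}$ the unit becomes $(1,\dots,1)$.

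Next I would set up the comparison. The glued points are exactly the $\mathbb{F}_d$-translates of the pairs $\{s^{+\infty},s^{-\infty}\}$, $s\in S$, and the setwise stabiliser of such a pair is $\langle s\rangle\cong\mathbb{Z}$. Recording the difference of the two values at each glued pair gives a short exact sequence of $\mathbb{Z}[\mathbb{F}_d]$-modules
\[ 0\to K_0(C)\to P\to Q\to 0,\qquad Q\cong\bigoplus_{s\in S}\mathbb{Z}[\mathbb{F}_d/\langle s\rangle]. \]
Verifying that the cokernel is this finitely supported permutation module is a routine check using that $g_ns^{+\infty}$ and $g_ns^{-\infty}$ converge to a common limit when $|g_n|\to\infty$, exactly as in Lemma \ref{Lem:closedness of R}. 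By Shapiro's lemma $H_j(\mathbb{F}_d;Q)=\bigoplus_{s}H_j(\langle s\rangle;\mathbb{Z})$, whence $H_0(\mathbb{F}_d;Q)\cong\mathbb{Z}^d$ and $H_1(\mathbb{F}_d;Q)\cong\mathbb{Z}^d$. For the full boundary, $C(\partial\mathbb{F}_d)\rtimes\mathbb{F}_d$ is the Cuntz--Krieger algebra of the $2d\times 2d$ matrix $A$ with $A_{s,t}=1$ unless $t=s^{-1}$; a Smith normal form computation of $1-A^{t}$ on $\mathbb{Z}^{2d}$ yields $H_0(\mathbb{F}_d;P)\cong\mathbb{Z}^d\oplus\mathbb{Z}/(d-1)$ with the torsion generated by $[1]_P$ (this is the finite order of the unit recorded in Remark \ref{Rem:K-theory}), and $H_1(\mathbb{F}_d;P)\cong\mathbb{Z}^d$.

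I would then feed these into the homology long exact sequence of the comparison sequence,
\[ 0\to H_1\to H_1(\mathbb{F}_d;P)\xrightarrow{\ \beta\ } H_1(\mathbb{F}_d;Q)\xrightarrow{\ \delta\ } H_0\xrightarrow{\ i_0\ } H_0(\mathbb{F}_d;P)\xrightarrow{\ \gamma\ } H_0(\mathbb{F}_d;Q)\to 0. \]
Because $H_0(\mathbb{F}_d;Q)\cong\mathbb{Z}^d$ is torsion free and $\gamma$ is onto, $\gamma$ is an isomorphism on free parts and $\ker\gamma$ is exactly the torsion subgroup $\mathbb{Z}/(d-1)=\langle[1]_P\rangle$; hence $\mathrm{im}\,i_0$ is finite. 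Granting that the comparison map $\beta$ vanishes, the sequence forces $\delta$ to be injective with image of index $d-1$, so $K_1(C\rtimes\mathbb{F}_d)=\ker\beta=\mathbb{Z}^d$, and $K_0(C\rtimes\mathbb{F}_d)$ sits in an extension of $\mathbb{Z}/(d-1)$ by $\mathbb{Z}^d$ and in particular has rank $d$. Finally Lemma \ref{Lem:A3} shows $K_0(C\rtimes\mathbb{F}_d)$ is generated by the $d$ elements $[q[s]]_0$, and a finitely generated abelian group of rank $d$ generated by $d$ elements is free; hence $K_0(C\rtimes\mathbb{F}_d)\cong\mathbb{Z}^d$ with basis $\{[q[s]]_0\}$ and $[1]_0=(1,\dots,1)$.

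The main obstacle is the vanishing of $\beta$ in the last step, equivalently the rational independence of the classes $[q[s]]_0$ in $K_0(C\rtimes\mathbb{F}_d)$; everything else is either standard or follows formally once the rank is pinned to $d$. I would attack $\beta=0$ by a chain-level computation in the standard free resolution of $\mathbb{F}_d$, representing a class of $H_1(\mathbb{F}_d;P)=\ker\bigl(\bigoplus_s P\to P\bigr)$ by the explicit cycles arising from $\ker(1-A^{t})$ and checking that each component is already constant on the glued pairs, so that its image in $Q$ is zero. Alternatively, one can avoid the homological bookkeeping by exhibiting $d$ explicit $\mathbb{F}_d$-invariant $\mathbb{Z}$-valued functionals on $C(\partial\mathbb{F}_d/\mathcal{R}_S,\mathbb{Z})$, induced by the antisymmetric combinations at the first letter $e_s-e_{s^{-1}}$ (which are $\mathbb{F}_d$-invariant on the full boundary and descend to the quotient), and showing that their pairing matrix with the $[q[s]]_0$ is invertible; this gives the lower bound $\mathrm{rank}\geq d$ directly and finishes as above. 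I expect the careful identification of these comparison maps, rather than any single analytic estimate, to be the most delicate part of the argument.
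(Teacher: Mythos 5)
Your framework---Pimsner--Voiculescu for the quotient, the comparison sequence $0\to K_0(C)\to P\to Q\to 0$ with $Q\cong\bigoplus_{s\in S}\mathbb{Z}[\mathbb{F}_d/\langle s\rangle]$, Shapiro's lemma, and the homology long exact sequence---is sound, and it is genuinely different in organization from the paper's proof, which argues at the level of elements: the paper shows directly that ${\rm im}(\tau)$ contains no nontrivial combination $\sum_s n(s)q[s]$ by splitting into the cases $\sum_s n(s)\not\equiv 0$ and $\equiv 0 \pmod{d-1}$ and, in the latter case, writing down the full $\eta$-preimage $(g_s)+\ker(\eta)$ with $g_s=(n(s)-m)p[s^{-1}]$ and checking it misses $C(\partial\mathbb{F}_d/\mathcal{R}_S,\mathbb{Z})^{\oplus S}$. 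Your identifications of $H_0$, $H_1$, the $K$-theory of the full boundary algebra, and the closing rank-versus-generators argument are all correct. However, the proof is not complete: everything reduces to $\beta=0$, which is exactly the rational independence of the $[q[s]]_0$, i.e.\ the entire content of the theorem, and you leave it as an acknowledged obstacle with two proposed attacks --- one of which is wrong.

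The failing one is the ``alternative'' via invariant functionals. The vectors $e_s-e_{s^{-1}}$ do lie in $\ker(1-A)$ and hence define $\mathbb{F}_d$-invariant $\mathbb{Z}$-valued functionals on $P$, but their pairing with the classes $[q[t]]_0$ is identically \emph{zero}, not invertible: $q[t]=p[t]+p[t^{-1}]$ is symmetric under inversion of the first letter while the functional is antisymmetric, so $\langle e_s-e_{s^{-1}},\,e_t+e_{t^{-1}}\rangle=\delta_{st}-\delta_{st}=0$. There is also a structural reason no pullback argument of this kind can work: your own exact sequence gives ${\rm im}(i_0)=\ker\gamma\cong\mathbb{Z}/(d-1)$, a torsion group (this does not depend on $\beta$), so \emph{every} functional on $K_0(C\rtimes\mathbb{F}_d)$ obtained by restricting an invariant functional on $P$ vanishes identically; the $d$ independent invariant functionals on $C(\partial\mathbb{F}_d/\mathcal{R}_S,\mathbb{Z})$ that do exist cannot be restrictions from $P$. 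This is precisely the phenomenon of Remark \ref{Rem:K-theory}: the unit has finite order upstairs but is a basis vector downstairs. Your first attack can be made to work, but as described it contains a trap: tuples built symbol-by-symbol from $\ker(1-A^{t})$, with components of the form $\sum_x v_xp[x]$, $v_{x^{-1}}=-v_x$, are \emph{not} constant on glued pairs (they take values $v_s$ and $-v_s$ at $s^{\pm\infty}$), so the check you propose would appear to refute $\beta=0$ rather than prove it. The correct input --- and it is exactly the known fact from \cite{Cun, PV, Spi} that the paper quotes --- is that $H_1(\mathbb{F}_d;P)=\ker(\eta)$ consists precisely of the \emph{constant} tuples (the Pimsner--Voiculescu images of the canonical unitaries $[u_s]_1$). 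Constant functions are trivially constant on glued pairs, so their image in $Q^{\oplus S}$ vanishes and $\beta=0$ follows at once. With that single fact supplied your argument closes; without it, the theorem is not proved.
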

\begin{proof}
We first show the isomorphism of the pair $(K_0, [1]_0)$.
By Lemma \ref{Lem:A3}, it suffices to show the linear independence of
the family $([q[s]]_0)_{s \in S}$.
Let $\eta\colon C(\partial\mathbb{F}_d, \mathbb{Z})^{\oplus S} \rightarrow C(\partial\mathbb{F}_d)$
be the additive map defined by
$(f_s)_{s\in S}\mapsto \sum_{s\in S} (f_s-s(f_s))$ and denote by $\tau$ the restriction of $\eta$
to $C(\partial\mathbb{F}_d/\mathcal{R}_S, \mathbb{Z})^{\oplus S}$.
Then the Pimsner--Voiculescu exact sequence \cite{PV} shows that
the canonical map
$C(\partial\mathbb{F}_d/\mathcal{R}_S, \mathbb{Z}) \rightarrow K_0(C(\partial\mathbb{F}_d/\mathcal{R}_S)\rtimes \mathbb{F}_d)$
is surjective and its kernel is equal to ${\rm im}(\tau)$.
Hence it suffices to show that ${\rm im}(\tau)$ does not contain
a nontrivial linear combination of the projections $q[s], s\in S$.
The isomorphisms $\ker(\eta)\cong K_1(C(\partial\mathbb{F}_d)\rtimes \mathbb{F}_d)\cong \mathbb{Z}^d$ (see \cite{Cun, PV, Spi})
show that $\ker(\eta)=\{(f_s)_{s\in S}: {\rm each\ }f_s{\rm\ is\ constant}\}$.
Now let $r=\sum_{s\in S}n(s)q[s]$ be a nontrivial linear combination of $q[s]$'s.
If $\sum_{s\in S} n(s) \not\equiv 0 \mod (d-1)$, then
$r \not \in {\rm im}(\eta)$ by \cite{Cun, Spi}.
If $\sum_{s\in S} n(s) = (d-1)m$ for some $m\in \mathbb{Z}$,
then $\sum_{s\in S}n(s)q[s]=\eta((g_{s})_{s \in S})$,
where $g_s:=(n(s)-m)p[s^{-1}]$ for $s\in S$.
Hence $\eta^{-1}(\{r\})=(g_s)_{s\in S}+ \ker(\eta)$,
which does not intersect with $C(\partial\mathbb{F}_d/\mathcal{R}_S, \mathbb{Z})^{\oplus S}$.
Thus we have $r\not\in {\rm im}(\tau)$ in either case.

The isomorphism of the $K_1$-group follows from the Pimsner--Voiculescu exact sequence \cite{PV}
and the equality $\ker(\tau)=\ker(\eta)$.
\end{proof}
\begin{Rem}\label{Rem:RW}
Let $\mathfrak{F}\subset S$ be a nonempty proper subset.
Then a similar proof to the above shows that $\partial \mathbb{F}_d/\mathcal{R}_{\mathfrak{F}}$ is
the Cantor set and $K_\ast(C(\partial\mathbb{F}_d/\mathcal{R}_{\mathfrak{F}})\rtimes\mathbb{F}_d)$
is isomorphic to $(\mathbb{Z}^d, (1, 0, \ldots, 0), \mathbb{Z}^d)$.
(The set
$\{ [1]_0, [p[s]]_0, [q[t]]_0: s\in S\setminus \mathfrak{F}, t\in \mathfrak{F}'\}$
is a basis of the $K_0$-group for any subset $\mathfrak{F}'$ of $\mathfrak{F}$ with the cardinality $\sharp\mathfrak{F}-1$.
We remark that the equality $(d-\sharp \mathfrak{F}-1)[1]_0=-\sum_{s\in \mathfrak{F}}[q[s]]_0$ holds
in the $K_0$-group.)
\end{Rem}
The classification theorem of Kirchberg and Phillips \cite{Kir, Phi},
Theorem \ref{Thm:K}, Remarks \ref{Rem:Kirchberg} and \ref{Rem:RW} show that
the crossed products of the dynamical systems $\varphi_{\mathfrak{F}}\colon \mathbb{F}_d\curvearrowright \partial\mathbb{F}_d/\mathcal{R}_{\mathfrak{F}}$ are mutually isomorphic for nonempty subsets $\mathfrak{F}$ of $S$.
We next show that they are not continuously orbit equivalent (or equivalently,
their transformation groupoids are not isomorphic as $\acute{{\rm e}}$tale groupoids; see \cite[Definition 5.4]{Suz} for the definition and \cite{Suz} for a relevant topic).
Hence we obtain examples of amenable minimal Cantor $\mathbb{F}_d$-systems
which are not continuously orbit equivalence but have isomorphic crossed products.
In the proof, we use the notion of the topological full group of Cantor systems.
For the definition, see \cite[Definition 5.5]{Suz} for example.
\begin{Thm}\label{Thm:CE}
Let $\mathfrak{F}$ and $\mathfrak{F}'$ be nonempty subsets of $S$.
Then $\varphi_{\mathfrak{F}}$ and $\varphi_{\mathfrak{F}'}$ are continuously orbit equivalence
if and only if $\sharp \mathfrak{F}=\sharp \mathfrak{F}'$.
\end{Thm}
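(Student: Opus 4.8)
The plan is to prove the two implications separately. The reverse implication is soft: assuming $\sharp\mathfrak{F}=\sharp\mathfrak{F}'$, I would choose a bijection $\mathfrak{F}\to\mathfrak{F}'$ and extend it to a permutation $\sigma$ of $S$, which induces an automorphism $\alpha$ of $\mathbb{F}_d$. By relabelling infinite reduced words, $\alpha$ induces a homeomorphism $\partial\alpha$ of $\partial\mathbb{F}_d$ with $\partial\alpha(g.x)=\alpha(g).\partial\alpha(x)$ and $\partial\alpha(t^{\pm\infty})=\sigma(t)^{\pm\infty}$. Hence $\partial\alpha$ carries $\mathcal{R}_{\mathfrak{F}}$ onto $\mathcal{R}_{\mathfrak{F}'}$ and descends to a homeomorphism $h\colon \partial\mathbb{F}_d/\mathcal{R}_{\mathfrak{F}}\to\partial\mathbb{F}_d/\mathcal{R}_{\mathfrak{F}'}$ satisfying $h\circ\varphi_{\mathfrak{F}}(g)=\varphi_{\mathfrak{F}'}(\alpha(g))\circ h$. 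A conjugacy through a fixed automorphism is in particular a continuous orbit equivalence, with associated cocycle the locally constant map $g\mapsto\alpha(g)$.

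For the forward implication I would produce a numerical invariant of continuous orbit equivalence equal to $\sharp\mathfrak{F}$. Since each $\varphi_{\mathfrak{F}}$ is minimal and topologically free, a continuous orbit equivalence is the same as an isomorphism of transformation groupoids, hence also an isomorphism of the associated topological full groups; such an isomorphism preserves the unit space together with its orbit equivalence relation and its isotropy bundle. I would first determine the isotropy of $\mathcal{G}_{\mathfrak{F}}=(\partial\mathbb{F}_d/\mathcal{R}_{\mathfrak{F}})\rtimes\mathbb{F}_d$: the points with nontrivial isotropy are exactly the images of the rational boundary points $w^{\pm\infty}$, and at every such point the isotropy is infinite cyclic (one checks, in particular, that no group element maps $t^{+\infty}$ to $t^{-\infty}$, so the gluing does not enlarge the isotropy). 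Thus the isotropy groups alone cannot detect $\mathfrak{F}$.

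The invariant instead comes from the local dynamics of the isotropy generator. I would call a point $y$ with isotropy $\langle\gamma\rangle\cong\mathbb{Z}$ \emph{two-sided} if it admits nontrivial clopen neighborhoods $V_+,V_-$ with $V_+\cap V_-=\{y\}$, $\gamma V_+\subsetneq V_+$, and $\gamma^{-1}V_-\subsetneq V_-$; this is phrased purely in terms of the unit space, the isotropy, and the local partial homeomorphisms, so it is preserved under groupoid (equivalently full-group) isomorphism. I expect to verify that the two-sided points are precisely the glued points $[t^{+\infty}]=[t^{-\infty}]$ with $t\in\mathfrak{F}\cup\mathfrak{F}^{-1}$: at such a point the attracting clopen set of $t$ and the $t^{-\infty}$-side supply $V_+$ and $V_-$, whereas at a non-glued rational point $[w^{+\infty}]$ the isotropy generator is purely contracting, so no valid $V_-$ exists for it or for its inverse. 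Since distinct generators in $\mathfrak{F}$ yield non-conjugate maximal cyclic subgroups and hence distinct $\mathbb{F}_d$-orbits, while $t$ and $t^{-1}$ determine the same glued point, the number of orbits of two-sided points equals $\sharp\mathfrak{F}$. A continuous orbit equivalence therefore forces $\sharp\mathfrak{F}=\sharp\mathfrak{F}'$.

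The main obstacle is the careful localization at the isotropy points. On $\partial\mathbb{F}_d$ the point $t^{+\infty}$ is a global attractor for $t$, so the contracting and expanding behaviour separates only after passing to the quotient and restricting to suitable clopen pieces; I expect the delicate step to be verifying rigorously that no non-glued rational point satisfies the two-sided condition for any element of its infinite cyclic isotropy, and that the two-sidedness condition is genuinely intertwined by the full-group isomorphism. Establishing the latter relies on the spatial realization of the full-group isomorphism by a groupoid isomorphism, which matches up isotropy together with the germs of the local action; granting this, the orbit count above completes the argument.
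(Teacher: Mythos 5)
Your overall strategy coincides with the paper's: for the reverse direction a permutation of $S$ inducing a conjugacy (the paper leaves this step implicit, and your automorphism argument for it is correct), and for the forward direction an invariant extracted from the points with nontrivial isotropy together with the local dynamics of the isotropy generator, counted up to $\mathbb{F}_d$-orbits. Your isotropy analysis is also correct: every isotropy group is infinite cyclic, and no group element carries $t^{+\infty}$ to $t^{-\infty}$, so the isotropy groups alone cannot see $\mathfrak{F}$.

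The gap is in your definition of a \emph{two-sided} point, which is the heart of the forward implication. As literally stated it is unsatisfiable: $V_+$ and $V_-$ are required to be clopen neighborhoods of $y$, so $V_+\cap V_-$ is again a clopen neighborhood of $y$; since $\partial\mathbb{F}_d/\mathcal{R}_{\mathfrak{F}}$ is a Cantor set, the singleton $\{y\}$ is never clopen, hence no point is two-sided and your invariant is identically zero. The natural repair, asking only that $V_\pm$ be compact (say, with $V_+\cup V_-$ a neighborhood of $y$), overshoots in the other direction: then every rational point, glued or not, becomes two-sided. Concretely, let $y=[u^{+\infty}]$ with $u\in S\setminus\mathfrak{F}$, let $U'\subset U$ be the cylinder sets of infinite words beginning with $u^N$, resp.\ $u^M$ (with $N>M$), and put $V_+:=\pi(U')$ and $V_-:=\pi\bigl((\partial\mathbb{F}_d\setminus U)\cup\{u^{+\infty}\}\bigr)$. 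Both members of a glued pair $\{gt^{+\infty},gt^{-\infty}\}$ (with $g$ the shortest coset representative and $t\in\mathfrak{F}\cup\mathfrak{F}^{-1}$, so $t\neq u^{\pm 1}$) begin with the same reduced word $g$, so one of them lies in $U'$ if and only if both do, and likewise for $\partial\mathbb{F}_d\setminus U$; from this one checks $V_+\cap V_-=\{y\}$, that $V_+$ is even a clopen neighborhood of $y$, and that $uV_+\subsetneq V_+$ and $u^{-1}V_-\subsetneq V_-$ (the latter because $U\subsetneq u^{-1}U$). So your condition holds at a point where nothing is glued, the orbit count is infinite for every $\mathfrak{F}$, and the invariant collapses. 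What is missing is a clause forcing $V_-$ to be genuinely local at $y$, for instance $\bigcap_{n\geq 1}\gamma^{n}V_+=\{y\}=\bigcap_{n\geq 1}\gamma^{-n}V_-$: this kills the counterexample, since there $\bigcap_{n}u^{-n}V_-=\{[u^{+\infty}],[u^{-\infty}]\}$. The paper's condition $(\ast)$ implements exactly this safeguard in a different formalization: it asks for an element $F$ of the topological full group fixing $x$ and acting nontrivially on every neighborhood of $x$ such that \emph{neither} $(F^{n})_{n}$ \emph{nor} $(F^{-n})_{n}$ converges uniformly to the constant map $x$ on any neighborhood of $x$. At a non-glued point any such $F$ is locally $u^{k}$ and one of the two iterate sequences does converge uniformly (the attracting direction), while at a glued point the repelling side, now attached to $x$, obstructs uniform convergence in both directions. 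Finally, a small conceptual correction: you do not need any spatial realization theorem for full-group isomorphisms; a continuous orbit equivalence already gives an isomorphism of transformation groupoids, so it suffices (as in the paper) that the distinguishing condition be expressed in terms of the groupoid, i.e.\ the unit space, the isotropy, and the germs of the local action.
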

\begin{proof}
Let $\mathfrak{F}$ be a nonempty subset of $S$
and denote by $[y]$ the element of $\partial \mathbb{F}_d /\mathcal{R}_{\mathfrak{F}}$ represented by $y\in \partial \mathbb{F}_d$.
Set
$$X:=\left\{x\in \partial\mathbb{F}_d/\mathcal{R}_\mathfrak{F}: {\rm there\ is\ }g\in \mathbb{F}_d\setminus \{e\}{\rm\ with\ }g.x=x \right\}.$$
Notice that the definition of $X$ only depends on the structure of the
transformation groupoid
$\partial\mathbb{F}_d/\mathcal{R}_{\mathfrak{F}}\rtimes \partial\mathbb{F}_d$.
It is clear from the definition that
$X=\{[w^{+\infty}]: w\in \mathbb{F}_d\setminus\{e\}\}$. 
For each element $x\in X$, consider the following condition.
\begin{itemize}
\item[$(\ast)$]There is $F \in [[\varphi_{\mathfrak{F}}]]$
which fixes $x$ and acts nontrivially on any neighborhood of $x$,
and both $(F^{n})_{n=1}^\infty$ and  $(F^{-n})_{n=1}^\infty$
do not uniformly converge to the constant map $y\mapsto x$ on any neighborhood of $x$.
\end{itemize}
Then it is easy to check that
$$Y:=\left\{x\in X: x{\rm\ satisfies\ the\ condition\ }(\ast)\right\}=\left\{[gw^{+\infty}]: g\in \mathbb{F}_d, w\in \mathfrak{F}\right\}.$$
Now the cardinality of $\mathfrak{F}$
is recovered as the number of the $\mathbb{F}_d$-orbits in $Y$.
\end{proof}
\begin{Rem}
It follows from Theorem \ref{Thm:CE} and Matui's theorem \cite[Theorem 3.10]{Mat} that
the topological full groups of $\varphi_{\mathfrak{F}}$ and $\varphi_{\mathfrak{F'}}$
are not isomorphic when $\sharp \mathfrak{F} \neq \sharp\mathfrak{F}'$.
\end{Rem}
\begin{Rem}
The induced construction of dynamical systems (see \cite[Theorem 4.7]{Suz} for instance) and the proof of Theorem \ref{Thm:CE} show that any finitely generated virtually free group $\Gamma$
admits amenable minimal topologically free Cantor $\Gamma$-systems
which are not continuously orbit equivalence but have isomorphic crossed products.
\end{Rem}
\subsection*{Acknowledgement}
The author would like to thank to Hiroki Matui,
who turns the author's interest to amenable quotients of the boundary actions.
He also thanks to Narutaka Ozawa
for helpful discussions on hyperbolic groups and approximation theory.
He is supported by Research Fellow
of the JSPS (No.25-7810) and the Program of Leading Graduate Schools, MEXT, Japan.

\end{document}